 \newtheorem{theorem}{Theorem}[section]
 \newtheorem{corollary}[theorem]{Corollary}
  \newtheorem{conjecture}[theorem]{Conjecture}
 \newtheorem{lemma}[theorem]{Lemma}
 \newtheorem{proposition}[theorem]{Proposition}
\theoremstyle{definition}
\theoremstyle{remark}
\newtheorem{rem}[theorem]{Remark}
\newtheorem{fact*}{Fact}
\DeclareMathOperator{\vecc}{vec}
\newcommand\dd{\mathrm d}
\DeclareMathOperator{\Tr}{Tr}
\newcommand{\cc}[1]{\overline{#1}}
\newcommand{\til}{\raise.17ex\hbox{$\scriptstyle\mathtt{\sim}$}}
\newcommand\beq{\begin{equation}}
\newcommand\eeq{\end{equation}}
\newcommand\black{\color{black}}
\newcommand{\bbm}{\left[ \begin{matrix}}
\newcommand{\ebm}{\end{matrix} \right]}
\newcommand{\bpm}{\left( \begin{matrix}}
\newcommand{\epm}{\end{matrix} \right)}
\numberwithin{equation}{section}
\newcommand{\tensor}[2]{\text{ }{\begin{smallmatrix} #1 \\ \otimes\\ #2\end{smallmatrix}}\text{  }}
\newcommand{\flattensor}[2]{#1 \otimes #2}
\newlength{\Mheight}
\newlength{\cwidth}
\newcommand{\dfn}[1]{{\bf #1}\index{#1}}
\title[The outer spectral radius]{The outer spectral radius and dynamics of completely positive maps}
\author{
J. E. Pascoe 
}
\date{\today}
\begin{document}

\begin{abstract}
We examine a special case of an approximation of the joint spectral radius given by Blondel and Nesterov,
which we call the outer spectral radius.
The outer spectral radius is given by the square root of
the ordinary spectral radius of the $n^2$ by $n^2$ matrix $\sum \flattensor{\overline{X_i}}{X_i}.$
We give an analogue of the spectral radius formula for the outer spectral radius which can be used to quickly obtain
the error bounds in methods based on the work of Blondel and Nesterov.
The outer spectral radius is used to analyze the iterates of a completely positive map, including the special case of quantum channels.
The average of the iterates of a completely positive map approach to a completely positive map where the Kraus operators span an ideal in the algebra
generated by the Kraus operators of the original completely positive map. We also give an elementary treatment of Popescu's theorems on similarity to row contractions in the matrix case,
describe connections to the Parrilo-Jadbabaie relaxation, and give a detailed analysis of the maximal spectrum of a completely positive map.
\end{abstract}
\maketitle

\tableofcontents

\section{Introduction}

The \dfn{spectral radius} of a matrix $X \in M_n(\mathbb{C}),$ denoted $\rho(X)$ is given by the maximum modulus of
the eigenvalues of $X.$ The spectral radius can also be computed via the Gelfand formula:
	$$\rho(X) = \lim_{k\rightarrow \infty} \|X^k\|^{1/k}.$$
The \dfn{joint spectral radius} of a tuple $(X_1,\ldots, X_d) \in M_n(\mathbb{C})^d$ is defined in terms of a Gelfand type formula to be:
$$\rho(X_1,\ldots, X_d) = \lim_{k\rightarrow \infty} \sup_{1 \leq i_1, \ldots, i_k \leq d} \| X_{i_1}\ldots X_{i_k} \|^{1/k}.$$
%\red Talk about how its been variously generalized already. \black
We define
the \dfn{outer spectral radius}, which is defined via the formula
	$$\hat{\rho}(X_1,\ldots , X_d) = \sqrt{\rho\left(\sum \flattensor{\overline{X_i}}{X_i}\right)}$$
where $\flattensor{A}{B}$ is just the usual Kronecker product and $\overline{A}$ is the complex conjugate of $A$.
The outer spectral radius is essentially a special case of the the Blondel-Nesterov approximation of the joint spectral
radius in \cite{blondnest05}. Moreover, the outer spectral radius relates to the quantum information theory literature as spectral radius of the completely positive map
$\phi(H)=\sum X_iHX_i^*$ \cite{Sanz, Hermes, Shitov, Raginsky, PopescuSimilarity, EKH}. Under this guise, although in an independent fashion, a Rota-Strang type theory has been developed by G. Popescu for the outer spectral radius \cite{PopescuSimilarity}. 
One goal will be to give a Gelfand type theorem, unify the various manifestations of the outer spectral radius and give an elementary treatment of the Popescu-Rota-Strang theory.
Moreover, we will give a detailed spectral analysis of $\sum \flattensor{\overline{X_i}}{X_i}.$ We note that, up to this point in time, it does not appear that the connection between
the Blondel-Nesterov relaxation of the joint spectral radius, the dynamics of completely positive maps and the Popescu theory of row contractions had been noticed.

Although the outer spectral radius is interesting in its own right as a natural relaxation of the joint spectral radius, the immediate question is:
what dynamics does it describe? It turns out there is a satisfying answer here as well- it describes the dynamics of the iterates of a completely positive map.
(This was the approach taken by Popescu in \cite{PopescuSimilarity}.)
Moreover, the same tools used to analyze the outer spectral radius can be used to do a more detailed analysis of $\sum \flattensor{\overline{X_i}}{X_i}$ itself.
The ``sinks" in terms of the dynamics here turn out to be ideals in the algebra generated by $X_1, \ldots, X_d.$
In the special case that the completely positive map is trace preserving, sometimes referred to  a quantum channel, the dynamics are particularly nice.

We can characterize the outer spectral radius $\hat{\rho}(X_1,\ldots, X_d)$ in terms of a Gelfand type formula.
\begin{theorem}\label{outerspectralradiusformula}
%If
%$(X_1,\ldots, X_d) \in M_n(\mathbb{C})^d,$ then,
Let $(X_1,\ldots, X_d) \in M_n(\mathbb{C})^d.$
$$\hat{\rho}(X_1,\ldots, X_d) = \lim_{k\rightarrow \infty} \sup_{
\sum |a_{i_1,\ldots,i_k}|^2 = 1}
\left\|\sum_{1 \leq i_1, \ldots, i_k \leq d}  a_{i_1,\ldots,i_k}X_{i_1}\ldots X_{i_k} \right\|^{1/k}.$$
\end{theorem}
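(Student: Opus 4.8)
The plan is to recognize the supremum inside the limit as the operator norm of a single matrix built on the $k$-fold tensor power, and then apply the ordinary Gelfand formula to that matrix. Fix $k$ and consider the word operators $W_\alpha = X_{i_1}\cdots X_{i_k}$ indexed by multi-indices $\alpha = (i_1,\ldots,i_k) \in \{1,\ldots,d\}^k$. The quantity $\sup\{\|\sum_\alpha a_\alpha W_\alpha\| : \sum |a_\alpha|^2 = 1\}$ is, by definition, the norm of the linear map $\C^{d^k} \to M_n(\C)$ sending $e_\alpha \mapsto W_\alpha$; equivalently, packaging the $W_\alpha$ as a single operator $R_k \colon \C^n \otimes \C^{d^k} \to \C^n$ (a ``row'' of the $W_\alpha$'s), we have $\sup_{\sum|a_\alpha|^2=1}\|\sum a_\alpha W_\alpha\| = \|R_k\|$. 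The first key computation is then $\|R_k\|^2 = \|R_k R_k^\ast\| = \bigl\| \sum_\alpha W_\alpha W_\alpha^\ast \bigr\|$, so the expression under the limit is $\bigl\|\sum_{|\alpha|=k} W_\alpha W_\alpha^\ast\bigr\|^{1/2k}$.

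Next I would identify $\sum_{|\alpha| = k} W_\alpha W_\alpha^\ast$ with the action of the $k$-th iterate of the completely positive map $\phi(H) = \sum_i X_i H X_i^\ast$ on the identity, and more usefully, vectorize: the map $H \mapsto \sum_i X_i H X_i^\ast$ has matrix $M := \sum_i \overline{X_i} \otimes X_i$ acting on $\C^{n^2} = \vecc(M_n(\C))$ under the identification $\vecc(AHB^\ast) = (\overline{B}\otimes A)\vecc(H)$. Consequently $\vecc\bigl(\sum_{|\alpha|=k} W_\alpha W_\alpha^\ast\bigr) = M^k \vecc(I)$. Thus $\bigl\|\sum_{|\alpha|=k} W_\alpha W_\alpha^\ast\bigr\|$ and $\|M^k \vecc(I)\|$ are two norms of essentially the same object; since all norms on the finite-dimensional space $M_n(\C)$ are equivalent, there are constants $0 < c \le C < \infty$ with $c\,\|M^k\vecc(I)\|_2 \le \bigl\|\sum_{|\alpha|=k}W_\alpha W_\alpha^\ast\bigr\|_{\mathrm{op}} \le C\,\|M^k \vecc(I)\|_2$, uniformly in $k$.

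Taking $2k$-th roots and letting $k \to \infty$, the constants $c^{1/2k}, C^{1/2k} \to 1$ wash out, so it remains to show $\lim_k \|M^k \vecc(I)\|^{1/2k} = \sqrt{\rho(M)} = \hat\rho(X_1,\ldots,X_d)$. The upper bound $\limsup_k \|M^k\vecc(I)\|^{1/2k} \le \rho(M)^{1/2}$ is immediate from $\|M^k v\| \le \|M^k\|\,\|v\|$ and Gelfand's formula for $\rho(M)$. For the lower bound one must rule out the vector $\vecc(I)$ being ``orthogonal'' to the top of the spectrum; here the positivity structure does the work — since $\phi$ is completely positive, $M = \sum \overline{X_i}\otimes X_i$ is (conjugate to) the matrix of a positive map, the cone of positive semidefinite matrices is invariant, $\vecc(I)$ is an interior point of that cone, and by a Perron–Frobenius / cone argument $\|M^k\vecc(I)\|$ grows at the full rate $\rho(M)^k$ (e.g.\ bound the restriction of $M^k$ to any fixed ball against its values on the order unit $I$). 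This positivity step is the main obstacle: for a general tuple the vector $\vecc(I)$ could be deficient, but complete positivity of $\phi$ forces $\vecc(I)$ to be a strictly positive element detecting the spectral radius, and that is exactly what makes the Gelfand-type formula come out to $\hat\rho$ rather than something smaller. I would carry out this last point either by the cone argument just sketched or, alternatively, by invoking the detailed spectral analysis of $\sum \overline{X_i}\otimes X_i$ promised later in the paper, whichever is cleaner at this stage.
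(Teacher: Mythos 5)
Your overall strategy --- recognize the supremum as a single matrix quantity, identify it with the $k$-th iterate of the completely positive map applied to $I$, then run a Gelfand-plus-positivity argument --- is viable and genuinely different from the paper's, but your ``first key computation'' is false as stated. The norm of the linear map $\C^{d^k}\to (M_n(\C),\|\cdot\|_{\mathrm{op}})$, $e_\alpha\mapsto W_\alpha$, is \emph{not} the norm of the row operator $R_k$: in $\|R_k\|=\sup\{\|\sum_\alpha W_\alpha v_\alpha\| : \sum_\alpha\|v_\alpha\|^2=1\}$ the vectors $v_\alpha$ range independently, whereas your supremum only allows $v_\alpha=a_\alpha v$ for a common unit vector $v$. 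Concretely, for $d=2$, $k=1$, $X_1=E_{1,1}$, $X_2=E_{1,2}$ in $M_2(\C)$ one has $\sup_{|a_1|^2+|a_2|^2=1}\|a_1X_1+a_2X_2\|=1$ while $\|X_1X_1^*+X_2X_2^*\|^{1/2}=\sqrt2$. What is true is the one-sided bound $\sup\le\|R_k\|$ together with a reverse inequality whose constant depends only on $n$: for instance $\|R_k\|^2\le\tr\sum_\alpha W_\alpha W_\alpha^*=\sum_\alpha\|W_\alpha\|_F^2\le n^2\sup_{\|a\|_2=1}\|\sum_\alpha a_\alpha W_\alpha\|_F^2\le n^3\sup_{\|a\|_2=1}\|\sum_\alpha a_\alpha W_\alpha\|_{\mathrm{op}}^2$, using that the matrix with columns $\vecc W_\alpha$ has rank at most $n^2$. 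Such constants wash out after taking $k$-th roots, so your argument survives once the claimed identity is replaced by this two-sided comparison; but as written the step is wrong and cannot be presented as an equality.

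With that repair, your route differs from the paper's in an instructive way. You pass to $\sum_\alpha W_\alpha W_\alpha^*=\phi^k(I)$, i.e.\ $M^k\vecc I$, and must then prove $\lim_k\|M^k\vecc I\|^{1/k}=\rho(M)$; the lower bound really does need the positivity input you sketch, most cleanly via the Russo--Dye corollary that a positive map $\phi$ on $M_n(\C)$ satisfies $\|\phi\|=\|\phi(I)\|$, whence $\|\phi^k(I)\|=\|\phi^k\|\asymp\|M^k\|$ and Gelfand finishes. The paper instead applies the entry-permutation $\psi$ to obtain the exact identity $(W^k)^\psi=V_kV_k^*$, where $V_k$ has columns $\vecc (X_{i_1}\cdots X_{i_k})$; then $\|V_k\|_2$ \emph{is} exactly the supremum in the statement (measured in Frobenius norm), and $\|(W^k)^\psi\|_F=\|W^k\|_F$ because $\psi$ merely permutes entries, so both bounds come for free with no Perron--Frobenius step. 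Your approach buys a direct link to the dynamics of $\phi^k(I)$; the paper's buys exact identities and a shorter proof.
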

Theorem \ref{outerspectralradiusformula} is proven Section \ref{outerspectralradiusformulaproof}.

Let $T$ be a square matrix.
We call the subset of the eigenvalues of $T$ with maximum modulus which have maximal degeneracy index the \dfn{maximal spectrum of $T$}. If the maximal degeneracy index is $1$ we say the maximal spectrum is 
    \dfn{nondegenerate}. (The degeneracy index of an eigenvalue is the maximum size of the Jordan blocks correspond to that eigenvalue.)
A canonical choice of positive $L$ such that $L - \sum X_iLX_i^*,$
obtained in Theorem \ref{mainresult}, our treatment of the Popescu's Rota-Strang theory,
has a special enough form to imply that the maximal eigenvalue of $\sum \flattensor{\overline{X_i}}{X_i}$ can be chosen to be non-negative and real-valued. (This is similar to the degenerate case of the classical
Perron-Frobenius theorem, and can be viewed as a degenerate case of the Quantum Perron-Frobenius theorem\cite{EKH}.)
That is, be have the following result.
\begin{theorem}[Degenerate Quantum Perron-Frobenius theorem]\label{maximalreal}
	Let $(X_1,\ldots, X_d) \in M_n(\mathbb{C})^d.$
	Let $T = \sum \flattensor{\overline{X_i}}{X_i}.$
	There is a non-negative real eigenvalue $\lambda$ of $T$ with degeneracy index $\eta$, such that any other eigenvalue $\lambda'$ of $T$ with degeneracy index $\eta'$
	has the property that either
    \begin{enumerate}    
        \item $\lambda= |\lambda'|$ and $\eta\geq\eta',$
        \item $\lambda > |\lambda'|.$
    \end{enumerate}
    That is, the maximal spectrum has a real nonnegative element.
\end{theorem}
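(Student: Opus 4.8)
The plan is to pass from $T=\sum\overline{X_i}\otimes X_i$ to the completely positive map it generates and run a Perron--Frobenius-type asymptotic argument. Under the identification $\mathbb C^n\otimes\mathbb C^n\cong M_n(\mathbb C)$ by vectorization, $T$ is unitarily equivalent to $\phi\colon M_n(\mathbb C)\to M_n(\mathbb C)$, $\phi(H)=\sum X_iHX_i^{*}$, so $T$ and $\phi$ have the same eigenvalues with the same Jordan structure and it suffices to prove the statement for $\phi$. The map $\phi$ is completely positive, hence positive: it carries the proper cone $M_n(\mathbb C)^{+}$ into itself. Put $\rho=\rho(\phi)=\hat\rho(X_1,\dots,X_d)^{2}\ge 0$; the claimed eigenvalue is $\lambda=\rho$. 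If $\rho=0$ the statement is trivial ($0$ is the only eigenvalue), so assume $\rho>0$ and, replacing each $X_i$ by $X_i/\hat\rho$, assume $\rho=1$. It remains to show that $1$ is an eigenvalue of $\phi$ and that its degeneracy index $\eta:=\operatorname{ind}_1(\phi)$ (size of the largest Jordan block of $\phi$ at $1$) satisfies $\eta\ge\operatorname{ind}_\lambda(\phi)$ for every eigenvalue $\lambda$ with $|\lambda|=1$; eigenvalues with $|\lambda|<1$ fall into alternative~(2) automatically.

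For an eigenvalue $\nu$ and $W\in M_n(\mathbb C)$, call $\max\{m:(\phi-\nu)^{m-1}P_\nu W\neq 0\}$ (with $P_\nu$ the spectral projection at $\nu$, the value being $0$ if $P_\nu W=0$) the \emph{index of $W$ at $\nu$}; it is always $\le\operatorname{ind}_\nu(\phi)$. The key claim is: \emph{for every $W\ge 0$ and every unimodular eigenvalue $\mu$ of $\phi$, the index of $W$ at $1$ is at least the index of $W$ at $\mu$.} Granting this, choose a unimodular eigenvalue $\mu_0$ with $\operatorname{ind}_{\mu_0}(\phi)$ maximal. Since $(\phi-\mu_0)^{\operatorname{ind}_{\mu_0}(\phi)-1}P_{\mu_0}$ is a nonzero linear map and positive semidefinite matrices span $M_n(\mathbb C)$, there is $W_0\ge 0$ whose index at $\mu_0$ equals $\operatorname{ind}_{\mu_0}(\phi)$; the claim then gives $\eta=\operatorname{ind}_1(\phi)\ge\operatorname{ind}_{\mu_0}(\phi)=\max_{|\lambda|=1}\operatorname{ind}_\lambda(\phi)$, which is what we need — and in particular forces $1\in\operatorname{spec}(\phi)$. (Alternatively, $1\in\operatorname{spec}(\phi)$ is the finite-dimensional Krein--Rutman theorem for the cone $M_n(\mathbb C)^{+}$.)

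To prove the claim, fix $W\ge 0$ and set $a_k:=\phi^k(W)\ge 0$. Expanding $\phi$ in Jordan form and discarding the eigenvalues of modulus $<1$ (contributing $o(1)$), $a_k=\sum_{|\mu|=1}\mu^{k}\sum_{j\ge 0}\binom{k}{j}\mu^{-j}(\phi-\mu)^{j}P_\mu W+o(1)$ in operator norm as $k\to\infty$. Let $m^{*}=\max_{|\mu|=1}(\text{index of }W\text{ at }\mu)$; if $m^{*}=0$ there is nothing to prove, so assume $m^{*}\ge 1$ and put $b_k:=a_k/k^{m^{*}-1}\ge 0$. Then $b_k=\sum_{\mu\in F}\mu^{k}Z_\mu+o(1)$, where $F$ is the nonempty set of unimodular eigenvalues at which $W$ has index exactly $m^{*}$ and $Z_\mu=\tfrac{1}{(m^{*}-1)!}\mu^{1-m^{*}}(\phi-\mu)^{m^{*}-1}P_\mu W\neq 0$ for $\mu\in F$. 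Suppose $1\notin F$. Taking traces, $\operatorname{tr}b_k=\sum_{\mu\in F}\mu^{k}\operatorname{tr}Z_\mu+o(1)$ is a nonnegative, asymptotically almost periodic scalar sequence with no constant Fourier mode, hence with vanishing Cesàro mean; since the superlevel sets of such a sequence are relatively dense (so of positive lower density), a vanishing Cesàro mean forces $\operatorname{tr}b_k\to 0$, and then $\|b_k\|\le\operatorname{tr}b_k\to 0$ because $b_k\ge 0$. But $Z_\mu=\lim_{K}\tfrac1K\sum_{k\le K}\overline{\mu}^{k}b_k=0$ for each $\mu\in F$, contradicting $Z_\mu\neq 0$. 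Hence $1\in F$: the index of $W$ at $1$ equals $m^{*}$, which dominates the index of $W$ at every unimodular $\mu$. Transporting back along the vectorization, $\lambda=\rho=\hat\rho^{2}$ is a real nonnegative eigenvalue of $T$ lying in the maximal spectrum.

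The main obstacle is the last paragraph: setting up the Jordan-form asymptotics for $\phi^{k}(W)$ with the correct leading matrix coefficients $Z_\mu$, and converting ``nonnegative with vanishing Cesàro mean'' into ``asymptotically zero'' via almost periodicity; the remaining steps are bookkeeping with Jordan forms and the cone Perron--Frobenius theorem. A second route, closer to the sketch in the introduction, is to use the canonical positive $L$ with $L-\sum X_iLX_i^{*}\ge 0$ from the Rota--Strang theory: $\operatorname{ran} L$ is invariant under each $X_i$, on it the tuple is similar to a row contraction, so that corner of $\phi$ is a contraction with semisimple peripheral spectrum, and one then induces on $\dim$ through the block-triangular decomposition of $\phi$ — with the peripheral index bookkeeping again the delicate point.
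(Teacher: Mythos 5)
Your proof is correct, and it rests on the same underlying mechanism as the paper's: positivity of the iterates of the completely positive map, together with Ces\`aro/Fourier averaging along the peripheral spectrum, forces the constant mode (eigenvalue $1$) to survive at the top Jordan level. The implementations differ in a way worth noting. The paper works directly with the $n^2\times n^2$ matrix $T$: it uses the Schur product property of the $\psi$ involution to get $(T^k)^\psi\geq 0$, passes to the limit points $T_\lambda$ of $T^k/\|T^k\|$ and their average $\hat T=\lim_N\frac1N\sum_{k\le N}T^k/\|T^k\|$, observes that $\hat T\neq 0$ essentially for free (each normalized power has norm $1$ and is $\psi$-positive, so the average has positive trace after applying $\psi$), and then reads off from the Jordan form that $\hat T$ annihilates every generalized eigenspace except the one for eigenvalue $1$ at maximal degeneracy index. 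You instead fix a single $W\geq 0$, expand $\phi^k(W)$ in Jordan form, normalize by $k^{m^*-1}$, and reduce the nonvanishing step to a scalar statement about $\operatorname{tr}\phi^k(W)$ via the lemma that a nonnegative (asymptotically) almost periodic sequence with vanishing mean must vanish; the Fourier coefficients $Z_\mu$ are then recovered by weighted Ces\`aro averages. Your route buys a slightly sharper local statement (for every $W\geq 0$, the index of $W$ at $1$ dominates its index at every unimodular eigenvalue) and avoids the $T_\lambda$/Haar-measure machinery set up in the paper's introduction, at the cost of having to justify the almost-periodicity lemma, which is the one step you should write out in full (first use relative density of sublevel sets to upgrade $c_k+o(1)\geq 0$ to $c_k\geq 0$, then relative density of superlevel sets plus zero mean to get $c_k\equiv 0$). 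The paper's version is shorter given its preliminaries; both are complete.
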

Theorem \ref{maximalreal} is proven in Section \ref{maxsect}.
We give some further comments on the structure of the spectrum of $T$ in \ref{ParilloSection}, which explains some observations by Blondel-Nesterov and the relationship with the Parrilo-Jadbabaie relaxation of the
joint spectral radius.

\subsection{Dynamics of quantum channels and other completely positive maps}
The classical joint spectral radius can be seen as descibing the dynamics of switched linear systems \cite{DaubechiesLagarias, BlondelBirther}. In parallel, we can use the outer spectral radius theory to understand the dynamics of completely positive maps, and more specifically the dynamics of quantum channels, for which there has been some recent interest \cite{Sanz, Hermes, Shitov, Raginsky, PopescuSimilarity, EKH}.
That is we want to understand the iterates of a map $\phi:M_n(\mathbb{C}) \rightarrow M_n(\mathbb{C})$ of the form
    $$\phi(H)= \sum A_iHA_i^*$$
by the Choi-Kraus theorem \cite{cho72,kraus71}, the quantum channels are those completely positive maps such that additionally $\sum A_i^*A_i = I.$ These are the \dfn{trace preserving} completely positive maps. Maps satisfying
$\sum A_iA_i^* = I$ are called \dfn{unital.}
Given $T$ of the form $\sum \flattensor{\overline{A_i}}{A_i},$ we can write the map $\phi_T(H) = \sum A_iHA_i^*.$ (Moreover by the Choi-Kraus characterization, this is all of them.)
Note that $$\phi_{T_1T_2} = \phi_{T_1}\circ\phi_{T_2}.$$
Therefore, the dynamics of the map $\phi_T$ are essentially those of $T^n.$ Therefore, we can apply the typical idea in dynamics and study the part of $T$ corresponding to the maximal spectrum.

\subsubsection{The general case}
Let $T = \sum \flattensor{\overline{X_i}}{X_i}.$
Additionally, assume that $\hat{\rho}(X_1,\ldots , X_d)=1.$
Let $m_T$ denote the cardinality of the maximal  spectrum of $T.$
We define $\Lambda$ to be the closure of the subgroup of the torus $\mathbb{T}^{m_T}$ generated by the point $\tau=(\lambda_1, \ldots, \lambda_{m_T})$
where the $\lambda_i$ are the elements of the maximal spectrum.
As a topological space and a group, $\Lambda \cong \mathbb{T}^{c_T} \times H$ where $H$ is some finite abelian group and $c_T \in \mathbb{N}.$
There is natural bijective map $T_\lambda$ from $\Lambda$ to the set of limit points of the sequence $\frac{T^n}{\|T^n\|},$ since along any sequence such that
$\tau^n$ converges, the $\frac{T^n}{\|T^n\|}$ converges. (One can see this by considering the Jordan canonical form.)
Moreover, note that either $T_\lambda^2=T_{\lambda^2}$ or $T_\lambda^2 =0,$ depending on whether or not the maximal spectrum is nondegenerate.
Now by calculation, $$TT_\lambda=T_\lambda T = T_{\tau\lambda}.$$
The set of $T_\lambda$ describe the \dfn{asymptotic dynamics} of $T^n.$
Now, $$\hat{T} = \int_{\Lambda} T_\lambda \dd \lambda,$$ where the integral is taken with respect to normalized Haar measure on $\Lambda.$
Alternatively, we could take the elementary and equivalent definition $$\hat{T}= \lim_{N\rightarrow \infty}\frac{1}{N}\sum^N_{n=1} \frac{T^n}{\|T^n\|},$$ as for large $n,$ the quantity 
$\frac{T^n}{\|T^n\|}$ behaves like a random $T_{\lambda}.$
The matrix $\hat{T}$ essentially describes the average asymptotic dynamics. Note that either $\hat{T}^2 = \hat{T}$ or $\hat{T}^2 = 0.$
\begin{theorem}\label{dyntheorem}
    Let $T = \sum \flattensor{\overline{X_i}}{X_i}.$
    Additionally, assume that $\hat{\rho}(X_1,\ldots , X_d)=1.$
    \begin{enumerate}
    \item
    There is an $a \in \mathbb{N}$ such that for every $\lambda \in \Lambda$
    there are $A_{\lambda,1}\ldots A_{\lambda,a}\in M_n(\mathbb{C})$ such that
    $T_\lambda = \sum^a_{j=1} \flattensor{\overline{A_{\lambda,j}}}{A_{\lambda,j}}.$
    \item
     There are $B_{k}\in M_n(\mathbb{C})$ whose span is a nonzero ideal in the algebra generated by $X_1,\ldots, X_d$ such that
    $\hat{T} = \sum_k \flattensor{\overline{B_{k}}}{B_{k}}.$ Moreover, the $A_{\lambda,j}$ are contained in the span of the $B_{k}$'s.
    \end{enumerate}
\end{theorem}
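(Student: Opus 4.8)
My plan is to recast the statement through the Choi--Kraus correspondence. Under $\vecc$, an operator $S$ on $\C^{n^2}$ acting by $S\,\vecc(H)=\vecc(\phi_S(H))$ has the form $S=\sum_j\flattensor{\overline{A_j}}{A_j}$ precisely when the map $\phi_S$ is completely positive, and then $\phi_S$ has a Kraus representation with at most $n^2$ operators, whose span $\vecspace_{\phi_S}$ --- the image under $\vecc^{-1}$ of the range of the Choi matrix $\CC_{\phi_S}$ --- is independent of the representation chosen (since $\CC_{\phi_S}=\sum_j\vecc(A_j)\vecc(A_j)^\ast$ depends only on $\phi_S$). Since each $\phi_{T^n}=\phi_T^n$ is completely positive and the cone of completely positive maps on $M_n(\C)$ is closed and convex, every limit point $T_\lambda$ of the bounded sequence $T^n/\|T^n\|$, every Ces\`aro average $\tfrac1N\sum_{n\le N}T^n/\|T^n\|$, and therefore $\hat T$ itself, is a nonnegative multiple of a completely positive map. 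This immediately produces the Kraus decompositions in parts (1) and (2); one may take $a=n^2$, improved at the end to $a=\dim\vecspace_{\hat T}$.

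Next I would record the bookkeeping for spans of Kraus operators. Writing $\W=\operatorname{span}\{X_1,\dots,X_d\}$, the relation $\phi_{T_1T_2}=\phi_{T_1}\circ\phi_{T_2}$ shows that $\{X_iB_k\}_{i,k}$ is a Kraus family for $\phi_T\circ\phi_{\hat T}$ and $\{B_kX_i\}_{i,k}$ one for $\phi_{\hat T}\circ\phi_T$, so $\vecspace_{\phi_T\circ\phi_{\hat T}}=\W\cdot\vecspace_{\hat T}$ and $\vecspace_{\phi_{\hat T}\circ\phi_T}=\vecspace_{\hat T}\cdot\W$ (spans of products). On the other hand, the identities $TT_\lambda=T_\lambda T=T_{\tau\lambda}$ together with the translation invariance of Haar measure on the compact abelian group $\Lambda$ give $T\hat T=\int_\Lambda T_{\tau\lambda}\,\dd\lambda=\hat T=\hat T T$; equivalently $\phi_T\circ\phi_{\hat T}=\phi_{\hat T}=\phi_{\hat T}\circ\phi_T$. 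Comparing the two computations yields $\W\cdot\vecspace_{\hat T}=\vecspace_{\hat T}=\vecspace_{\hat T}\cdot\W$, and iterating over words shows that $\vecspace_{\hat T}$ is a two-sided ideal in the algebra generated by $X_1,\dots,X_d$.

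It remains to check that this ideal is nonzero and that it contains the $A_{\lambda,j}$. By the Jordan canonical form, along a subsequence with $\tau^{n_k}\to\lambda$ one has $T^{n_k}/\|T^{n_k}\|\to\sum_j\lambda_{(j)}M_j=T_\lambda$, where $M_j$ is the rescaled dominant part of $T^n$ on the generalized eigenspace of the $j$-th element $\lambda_j$ of the maximal spectrum and $\lambda_{(j)}$ is the $j$-th coordinate of $\lambda$. Integrating over $\Lambda$ and using that the $j$-th coordinate character of $\Lambda$ is trivial exactly when $\lambda_j=1$, we get $\hat T=\sum_{j:\lambda_j=1}M_j$. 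By Theorem~\ref{maximalreal}, since $\rho(T)=\hat\rho(X_1,\dots,X_d)^2=1$, the value $1$ is an eigenvalue of $T$ lying in the maximal spectrum, so exactly one $\lambda_j$ equals $1$ and $\hat T=M_{j_0}$, which is nonzero because $\|T^n\|$ is precisely calibrated to the growth of the leading part on the $1$-eigenspace (a Jordan block of maximal size); thus $\vecspace_{\hat T}$ is a nonzero ideal. For the inclusion, linearity of $\psi\mapsto\CC_\psi$ gives $\CC_{\hat T}=\int_\Lambda\CC_{T_\lambda}\,\dd\lambda$ with each $\CC_{T_\lambda}\succeq0$, so for $v\in\ker\CC_{\hat T}$ the nonnegative continuous function $\lambda\mapsto v^\ast\CC_{T_\lambda}v$ has zero Haar integral; as $\Lambda$ has full support this function vanishes identically, whence $v\in\ker\CC_{T_\lambda}$ for every $\lambda$. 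Therefore $\operatorname{ran}\CC_{T_\lambda}\subseteq\operatorname{ran}\CC_{\hat T}$, i.e.\ $\vecspace_{T_\lambda}\subseteq\vecspace_{\hat T}=\operatorname{span}\{B_k\}$; the $A_{\lambda,j}$ can be chosen in this span, and $a=\dim\vecspace_{\hat T}$ works in (1).

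The one genuinely delicate step is the non-vanishing of $\hat T$: a priori all the unimodular coefficients $\lambda_{(j)}$ occurring in $\hat T=\sum_jM_j\int_\Lambda\lambda_{(j)}\,\dd\lambda$ could be nontrivial characters of $\Lambda$ and be killed by the Haar integral, leaving only the zero ideal. Precisely the degenerate Quantum Perron--Frobenius input, Theorem~\ref{maximalreal}, which forces $1$ into the maximal spectrum, rules this out. Everything else --- closedness and convexity of the completely positive cone, the composition rule for spans of Kraus operators, and the averaging identity $T\hat T=\hat T T=\hat T$ --- is routine.
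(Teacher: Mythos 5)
Your proof is correct and is essentially the paper's argument in different clothing: the positivity of the Choi matrix of each $T_\lambda$ and of $\hat T$ is exactly the statement $T_\lambda^\psi\geq 0$, $\hat T^\psi\geq 0$ used in the paper, the ideal property comes from the same identity $T\hat T=\hat T T=\hat T$ read off on spans of Kraus operators, and the inclusion of the $A_{\lambda,j}$ in the span of the $B_k$ is the same kernel-of-a-positive-integral observation. The only divergence is that you derive $\hat T\neq 0$ from Theorem \ref{maximalreal} via a Jordan-form computation, whereas the paper gets it directly (and slightly more simply) from the fact that each $T_\lambda^\psi$ is positive semidefinite and nonzero, so the Haar integral cannot vanish; both routes are valid since Theorem \ref{maximalreal} is established beforehand.
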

Theorem \ref{dyntheorem} is proved as Proposition \ref{dynprop}

We can interpret the above theorem as saying that a sink in this framework is somehow corresponded to an ideal $I$ spanned by the $B_k$'s.
Moreover, this ideal is weakly graded into the subspaces $I_\lambda$ spanned by $A_{\lambda,j}$ in the sense that the sum of all the $I_\lambda$ is $I,$
and $\Lambda$ naturally acts on the $I_\lambda.$ Note that 
$$TT_\lambda=T_{\tau\lambda}=\sum^d_{i=1} \sum^a_{j=1} \flattensor{\overline{X_iA_{\lambda,j}}}{X_iA_{\lambda,j}}= \sum^a_{j=1} \flattensor{\overline{A_{\tau\lambda,j}}}{A_{\tau\lambda,j}}.$$
 That is, the movement of $I_\lambda$ as $\lambda$ ranges through $\Lambda$ somehow should be thought of as an asymptotic orbit.
Note these dynamics describe the limiting coefficients of the channel $\phi_{T^n/\|T^n\|}$ and not the range of such a channel.
In the case where $T_\lambda^2 = 0,$ the ideal $I$ must satisfy that $I^2=0$.
Moreover, whenever the maximal spectrum contains a single element, as happens generically, the group $\Lambda$ has a single element, and therefore $\hat{T} = T_\lambda,$ which in turn should be thought of as saying the action has no circulation in the limit or is ergodic.

Whenever $X_1, \ldots, X_d$ generate the full algebra of $n$ by $n$ matrices, average asymptotic dynamics simplify. The condition that the coordinates generate the full algebra
essentially corresponds to the condition of having all positive entries in the classical Perron-Frobenius theorem. Under an irreducibility type assumptions various Quantum Perron-Frobenius theorems which establish
the existence of a simple real eigenvalue with maximum modulus have been obtained by Evans-Hoegh-Krohn\cite{EKH}, Schrader\cite{shred} and Lagro-Yang-Xiong\cite{Lagro2017}. 
We essentially gather some more detailed structure than the aforementioned works in the finite dimensional case by applying Theorem \ref{dyntheorem}.
\begin{theorem}[Quantum Perron-Frobenius Theorem] \label{fullmaximal}
     Let $T = \sum \flattensor{\overline{X_i}}{X_i}.$
    Additionally, assume that $\hat{\rho}(X_1,\ldots , X_d)=1.$
    If $X_1, \ldots, X_d$ generate the full algebra of $n$ by $n$ matrices, then $\hat{T}$ has rank $1.$
    Moreover, in such a case, $\hat{T}=\sum_k \flattensor{\overline{B_{k}}}{B_{k}}$ where the $B_k$'s span $M_n(\mathbb{C}).$    
    That is, the ideal described in Theorem \ref{dyntheorem} is equal to the whole algebra, as the only nonzero ideal in $M_n(\mathbb{C})$ is  $M_n(\mathbb{C})$  itself.
\end{theorem}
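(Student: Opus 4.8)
The plan is to deduce everything from Theorem~\ref{dyntheorem}. Applying part~(2) there, we may write $\hat T = \sum_k \flattensor{\overline{B_k}}{B_k}$ where the span of the $B_k$ is a nonzero ideal in the algebra generated by $X_1,\dots,X_d$. By hypothesis that algebra is all of $M_n(\C)$, which is simple, so its only nonzero ideal is itself; hence the $B_k$ span $M_n(\C)$. This is exactly the ``moreover'' clause and the final sentence of the statement, so it remains only to prove $\operatorname{rank}\hat T = 1$.

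Next I would pass to the completely positive map $\psi := \phi_{\hat T}$, $\psi(H)=\sum_k B_k H B_k^\ast$, whose rank as a linear map on $M_n(\C)$ equals $\operatorname{rank}\hat T$. Two observations set up the argument. First, $\psi$ is idempotent: by the dichotomy recalled just before Theorem~\ref{dyntheorem} either $\hat T^2=\hat T$ or $\hat T^2=0$, and in the latter case the ideal spanned by the $B_k$ would square to zero, which is impossible as that ideal is $M_n(\C)$; so $\hat T^2=\hat T$ and, using $\phi_{T_1T_2}=\phi_{T_1}\circ\phi_{T_2}$, $\psi^2=\psi$. Second, since the $B_k$ span $M_n(\C)$ their adjoints have trivial common kernel (an invertible element of the span already moves any common-kernel vector), so $\rho:=\psi(I)=\sum_k B_kB_k^\ast$ is positive definite. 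Combining the two, $\rho=\psi^2(I)=\psi(\rho)$, so the fixed-point space $\mathcal F:=\operatorname{ran}\psi=\operatorname{ran}\hat T$ (equality by idempotency) contains a positive definite element; it is also $\ast$-closed, so its self-adjoint part $\mathcal F_{\mathrm{sa}}$ is a real subspace in which the positive semidefinite cone has nonempty interior, hence $\mathcal F_{\mathrm{sa}}$ is spanned by its positive semidefinite elements.

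The crux is a rigidity lemma: every positive semidefinite fixed point $H$ of $\psi$ is either $0$ or positive definite. Indeed, if $\langle Hv,v\rangle=0$ then $0=\langle\psi(H)v,v\rangle=\sum_k\|H^{1/2}B_k^\ast v\|^2$, so $\ker H$ is invariant under every $B_k^\ast$; since the $B_k^\ast$ span $M_n(\C)$ the only invariant subspaces are $0$ and $\C^n$, forcing $H=0$ or $\ker H=0$. Granting this, suppose $\dim\mathcal F_{\mathrm{sa}}\ge 2$ and pick linearly independent positive definite $H_1,H_2\in\mathcal F_{\mathrm{sa}}$ (possible since the cone is full-dimensional there). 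Put $t_0:=\lambda_{\min}(H_2^{-1/2}H_1H_2^{-1/2})>0$; then $H_1-tH_2\ge 0$ for $t\in[0,t_0]$, it is singular at $t=t_0$, and it lies in $\mathcal F_{\mathrm{sa}}$, so the lemma gives $H_1-t_0H_2=0$, contradicting independence. Hence $\dim\mathcal F_{\mathrm{sa}}=1$, i.e.\ $\operatorname{rank}\hat T=1$.

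The step I expect to require the most care is not the reduction but the passage from $\operatorname{rank}\hat T\ge 1$ to $\operatorname{rank}\hat T=1$: the pencil/rigidity argument is what pins down the rank exactly, and it hinges on the slightly nonobvious fact that $\psi(I)=\sum_k B_kB_k^\ast$ is automatically a positive definite fixed point of $\psi$. Beyond that, the only bookkeeping to watch is that idempotency of $\hat T$ genuinely transfers to $\psi=\phi_{\hat T}$ and that $\operatorname{rank}\hat T=\dim\operatorname{ran}\psi$. An alternative to the rigidity lemma would be to invoke the Perron--Frobenius theorem for irreducible positive maps (the eigenvalue $\hat\rho^2=1$ of $T$ is simple) together with the fact that the averaging in the definition of $\hat T$ kills every other peripheral eigenvalue, but the self-contained route above seems cleaner and keeps the proof internal to the tools already developed.
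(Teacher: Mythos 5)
Your proof is correct, and its overall skeleton matches the paper's: both arguments first get the $B_k$ spanning $M_n(\mathbb{C})$ from Theorem \ref{dyntheorem} plus simplicity, and both then show that $\operatorname{rank}\hat{T}\geq 2$ would produce a nonzero \emph{singular} positive semidefinite fixed point of $\phi_{\hat{T}}$, which is impossible. Where you diverge is in how that impossibility is established. The paper invokes Lemma \ref{poslem} with the identity $(\Tr H)I=\sum E_{ij}HE_{ij}^*$ to get the quantitative bound $\phi_{\hat{T}}(H)\geq \varepsilon(\Tr H)I$, so a singular image is immediately ruled out; you instead prove a self-contained rigidity lemma — the kernel of a PSD fixed point is invariant under every $B_k^*$, hence trivial since the $B_k$ span $M_n(\mathbb{C})$ — which is the standard irreducibility argument from the quantum Perron--Frobenius literature and avoids Lemma \ref{poslem} entirely. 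Your construction of the two competing fixed points ($\psi(I)=\sum B_kB_k^*$, which you correctly show is a positive definite fixed point, plus a small perturbation inside $\mathcal{F}_{\mathrm{sa}}$) is also a bit cleaner than the paper's choice of $\hat{T}\,\overline{v_j}\otimes v_j$, and your disposal of the case $\hat{T}^2=0$ (the ideal is $M_n(\mathbb{C})$ and so cannot square to zero) is more direct than the paper's kernel argument. Both routes buy the same theorem; yours is more self-contained and makes explicit the bookkeeping ($\operatorname{rank}\hat{T}=\dim_{\mathbb{C}}\mathcal{F}=\dim_{\mathbb{R}}\mathcal{F}_{\mathrm{sa}}$, transfer of idempotency to $\phi_{\hat{T}}$) that the paper leaves implicit, while the paper's use of Lemma \ref{poslem} gives the stronger quantitative conclusion $\phi_{\hat{T}}(H)\geq\varepsilon(\Tr H)I$ along the way. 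I see no gaps in your argument.
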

Theorem \ref{fullmaximal} is proved as Proposition \ref{fullmaximalprop}. Corollary \ref{fullmaximalcor} establishes that whenever the $X_i$'s generate the full algebra, then it can be conjugated to a row co-isometry or column isometry, therefore the corresponding conjugated channels are unital and trace preserving respectively.

\subsubsection{Nondegenerate maximal spectrum and quantum channels}
If the maximal spectrum is nondegenerate, the situation simiplifies somewhat.
Firstly, $T_\lambda T_{\lambda'} = T_{\lambda\lambda'}.$ That is, the map taking $\lambda$ to $T_\lambda$ behaves essentially like a group homomorphism. (However, it is possible that
$T_\lambda$ may have a nonzero kernel.) We also note that if $T$ describes a quantum channel, then $T$ must have nondegenerate maximal spectrum, as otherwise, there would be a positive $H$ such that
$\|\phi_{T^n}(H)\|\rightarrow \infty.$ However, this cannot happen as $\phi_T$ is trace preserving and a completely positive map.

% Say that A_i \otime A_i gets sent to something else

% since
%$\hat{\rho}(X) = \rho\left(\tensor{\overline{X}}{X}\right)<1$ if and only if $\rho(X) <1.$
%Additionally, it seems likely that the
%problem is significantly more difficult in the case of operators acting on a general Hilbert space, since this
%is the case even when $d=1$ \cite{}.

\subsection{The outer spectral radius formula and the Blondel-Nesterov approximation}

We will now discuss how Theorem \ref{outerspectralradiusformula} can be used to exhibit some well known approximations of
the joint spectral radius which originated with Blondel and Nesterov in \cite{blondnest05}, and has been improved in
various ways in \cite{xu2011, par08}.

Using Theorem \ref{outerspectralradiusformula} we can relate the outer spectral radius to the joint spectral radius.
We leave the details of this calculation to the reader.
\begin{corollary}\label{outerspectralradiusformulacor}
Let $(X_1,\ldots, X_d) \in M_n(\mathbb{C})^d.$
%If
%$(X_1,\ldots, X_d) \in M_n(\mathbb{C})^d,$
%then
$$\frac{1}{\sqrt{d}}\hat{\rho}(X_1,\ldots, X_d) \leq \rho(X_1,\ldots, X_d) \leq \hat{\rho}(X_1,\ldots, X_d).$$
\end{corollary}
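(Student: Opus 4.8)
The plan is to deduce both inequalities directly from the Gelfand-type formula in Theorem~\ref{outerspectralradiusformula}, comparing the supremum over $\ell^2$-normalized coefficient families with the supremum over single words appearing in the definition of the joint spectral radius $\rho(X_1,\ldots,X_d)$.

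For the upper bound $\rho(X_1,\ldots,X_d)\le\hat\rho(X_1,\ldots,X_d)$: fix a length $k$ and a word $(i_1,\ldots,i_k)$. Choosing the coefficient family $a_{i_1,\ldots,i_k}=1$ and all other coefficients $0$ satisfies $\sum|a|^2=1$, so $\|X_{i_1}\cdots X_{i_k}\|\le\sup_{\sum|a|^2=1}\|\sum a X_{i_1}\cdots X_{i_k}\|$. Taking $\sup$ over words, then $k$th roots, then $k\to\infty$, the left side tends to $\rho(X_1,\ldots,X_d)$ and the right side tends to $\hat\rho(X_1,\ldots,X_d)$ by Theorem~\ref{outerspectralradiusformula}.

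For the lower bound $\frac{1}{\sqrt d}\hat\rho(X_1,\ldots,X_d)\le\rho(X_1,\ldots,X_d)$: by the triangle inequality and Cauchy--Schwarz,
\[
\Bigl\|\sum_{i_1,\ldots,i_k} a_{i_1,\ldots,i_k}X_{i_1}\cdots X_{i_k}\Bigr\|
\le\sum_{i_1,\ldots,i_k}|a_{i_1,\ldots,i_k}|\,\|X_{i_1}\cdots X_{i_k}\|
\le\Bigl(\sum|a_{i_1,\ldots,i_k}|^2\Bigr)^{1/2}\Bigl(\sum_{i_1,\ldots,i_k}\|X_{i_1}\cdots X_{i_k}\|^2\Bigr)^{1/2}.
\]
There are $d^k$ words, each with $\|X_{i_1}\cdots X_{i_k}\|\le\bigl(\sup_{\text{words}}\|X_{i_1}\cdots X_{i_k}\|\bigr)$, so the second factor is at most $d^{k/2}\sup_{\text{words}}\|X_{i_1}\cdots X_{i_k}\|$. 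Hence the $\ell^2$-supremum over coefficient families is at most $d^{k/2}\sup_{\text{words of length }k}\|X_{i_1}\cdots X_{i_k}\|$; taking $k$th roots gives a factor $d^{1/2}$, and letting $k\to\infty$ yields $\hat\rho\le\sqrt d\,\rho$, i.e.\ the claimed bound.

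I do not expect any real obstacle here: both directions are one-line consequences of Theorem~\ref{outerspectralradiusformula} together with elementary norm estimates (picking out a single word in one direction, Cauchy--Schwarz plus counting $d^k$ words in the other). The only point requiring a modicum of care is confirming that the limits defining $\rho$ and $\hat\rho$ genuinely exist so that one may pass to the limit in the inequalities; for $\hat\rho$ this is Theorem~\ref{outerspectralradiusformula}, and for $\rho$ it is the classical Fekete subadditivity argument underlying the Gelfand-type definition of the joint spectral radius.
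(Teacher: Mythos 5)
Your proof is correct and takes exactly the route the paper intends: the paper leaves this corollary as a calculation from Theorem~\ref{outerspectralradiusformula}, and your two steps (testing against a single-word coefficient family for $\rho\le\hat\rho$, and Cauchy--Schwarz over the $d^k$ words for $\hat\rho\le\sqrt{d}\,\rho$) are the standard way to carry it out. The remark about existence of the limits (Theorem~\ref{outerspectralradiusformula} for $\hat\rho$, Fekete subadditivity for $\rho$) correctly disposes of the only delicate point.
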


That is, the outer spectral radius itself serves as an approximation of the joint spectral radius. We note that the outer spectral radius is relatively easy to compute, (since
it is just the maximum modulus eigenvalue of an $n^2$ by $n^2$ matrix) whereas the 
joint spectral radius is somewhat difficult\cite{blondel2,blondel}, although there
are several ways to approximate the joint spectral radius using techniques from optimization\cite{blondnest05, par08,par14}.

Blondel and Nesterov\cite{blondnest05} showed that, for tuples of matrices with nonnegative entries,
	$$\rho(X_1,\ldots, X_d) = \lim_{k\rightarrow \infty} \rho\left(\sum X_i^{\otimes k}\right)^{1/k}.$$
Here $A^{\otimes n}$ denotes the $n$-th Kronecker power of $A,$ the Kronecker product of $A$ with itself $n$ times. Moreover, they obtained inequalities as in \ref{outerspectralradiusformulacor}.
In \cite{xu2011}, Xiao and Xu showed that, in general,
	$$\rho(X_1,\ldots, X_d) = \limsup_{k\rightarrow \infty} \rho\left(\sum X_i^{\otimes k}\right)^{1/k}.$$
We use the outer spectral radius to generalize the Blondel-Nesterov formula in a way that eliminates the supremum from the Xiao-Xu formula and
gives bounds on the error which are the same as the error bounds in the Blondel-Nesterov\cite{blondnest05} approximation.
That is, we give a family of asymptotically tight approximations which converge to the joint spectral radius in Corollary \ref{osrfccor}
for arbitrary matrices over $\mathbb{C}.$
Specifically,
	\begin{align*}
	\rho(X_1,\ldots, X_d) & = \lim_{k\rightarrow \infty} \hat{\rho}(X_1^{\otimes k},\ldots, X_d^{\otimes k})^{1/k}, \\
	& = \lim_{k\rightarrow \infty} \rho\left(\sum \overline{X_i^{\otimes k}}  \otimes X_i^{\otimes k}\right)^{1/2k}.
	\end{align*}

In fact, Xiao and Xu\cite{xu2011} noted that, even for real matrices,
$\lim_{k\rightarrow \infty} \rho\left(\sum X_i^{\otimes k}\right)^{1/k}$ may not exist. We can now, in light of our new
approximation, view this phenomenon as stemming from the fact that $k$ could be odd, since, over the reals, our formula
implies that $\lim_{k\rightarrow \infty} \rho\left(\sum X_i^{\otimes 2k}\right)^{1/2k}$ exists.

To obtain our formula, we observe that
$$\rho(X_1^{\otimes k},\ldots, X_d^{\otimes k}) = \rho(X_1,\ldots, X_d)^k.$$
So, we obtain a Blondel-Nesterov type formula for
the joint spectral radius as a consequence of
Corollary \ref{outerspectralradiusformulacor}.
	 \begin{corollary} \label{osrfccor}
	 Let $(X_1,\ldots, X_d) \in M_n(\mathbb{C})^d.$
	 	$$\frac{1}{\sqrt[2k]{d}}\hat{\rho}(X_1^{\otimes k},\ldots, X_d^{\otimes k})^{1/k} \leq \rho(X_1,\ldots, X_d) \leq \hat{\rho}(X_1^{\otimes k},\ldots, X_d^{\otimes k})^{1/k}.$$
	 	Namely,
		$$\rho(X_1,\ldots, X_d) = \lim_{k\rightarrow \infty} \hat{\rho}(X_1^{\otimes k},\ldots, X_d^{\otimes k})^{1/k}.$$
	\end{corollary}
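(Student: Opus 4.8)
The plan is to deduce Corollary \ref{osrfccor} directly from Corollary \ref{outerspectralradiusformulacor}, applied not to $(X_1,\ldots,X_d)$ itself but to the ``inflated'' tuple $(X_1^{\otimes k},\ldots,X_d^{\otimes k}) \in M_{n^k}(\mathbb{C})^d$, combined with the homogeneity identity $\rho(X_1^{\otimes k},\ldots,X_d^{\otimes k}) = \rho(X_1,\ldots,X_d)^k$ already noted in the discussion preceding the statement.

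First I would record the homogeneity identity. Because the Kronecker product is functorial, $X_{i_1}^{\otimes k}\cdots X_{i_m}^{\otimes k} = (X_{i_1}\cdots X_{i_m})^{\otimes k}$, and because the spectral (operator) norm is multiplicative on Kronecker products, $\|(X_{i_1}\cdots X_{i_m})^{\otimes k}\| = \|X_{i_1}\cdots X_{i_m}\|^{k}$. Feeding this into the Gelfand-type definition of the joint spectral radius gives
$$\rho(X_1^{\otimes k},\ldots,X_d^{\otimes k}) = \lim_{m\to\infty}\ \sup_{1\le i_1,\ldots,i_m\le d}\ \|X_{i_1}^{\otimes k}\cdots X_{i_m}^{\otimes k}\|^{1/m} = \lim_{m\to\infty}\ \sup_{1\le i_1,\ldots,i_m\le d}\ \|X_{i_1}\cdots X_{i_m}\|^{k/m} = \rho(X_1,\ldots,X_d)^{k}.$$
Next I would apply Corollary \ref{outerspectralradiusformulacor} verbatim to the tuple $(X_1^{\otimes k},\ldots,X_d^{\otimes k})$, obtaining
$$\tfrac{1}{\sqrt d}\,\hat{\rho}(X_1^{\otimes k},\ldots,X_d^{\otimes k}) \ \le\ \rho(X_1^{\otimes k},\ldots,X_d^{\otimes k}) \ \le\ \hat{\rho}(X_1^{\otimes k},\ldots,X_d^{\otimes k}).$$
Substituting $\rho(X_1^{\otimes k},\ldots,X_d^{\otimes k}) = \rho(X_1,\ldots,X_d)^{k}$ and taking (positive) $k$-th roots throughout yields exactly the two-sided bound in the statement, since $(1/\sqrt d)^{1/k} = d^{-1/(2k)} = 1/\sqrt[2k]{d}$. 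Finally, $\sqrt[2k]{d}\to 1$ as $k\to\infty$, so the squeeze theorem gives $\rho(X_1,\ldots,X_d) = \lim_{k\to\infty}\hat{\rho}(X_1^{\otimes k},\ldots,X_d^{\otimes k})^{1/k}$, completing the proof.

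There is no genuine obstacle here: given Corollary \ref{outerspectralradiusformulacor}, the argument is pure bookkeeping. The only step that deserves a sentence of justification is the homogeneity identity $\rho(X_i^{\otimes k}) = \rho(X_i)^{k}$, and within it the multiplicativity $\|A\otimes B\| = \|A\|\,\|B\|$ of the spectral norm — this is the one place where it matters that the joint spectral radius in this paper is defined with the operator norm rather than an arbitrary submultiplicative norm. Everything else is a direct substitution, which is why the computation is left to the reader in the statement of Corollary \ref{outerspectralradiusformulacor}'s companion Corollary \ref{outerspectralradiusformulacor}.
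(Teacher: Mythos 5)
Your proposal is correct and is essentially the paper's own argument: the paper derives Corollary \ref{osrfccor} by applying Corollary \ref{outerspectralradiusformulacor} to the tuple $(X_1^{\otimes k},\ldots,X_d^{\otimes k})$ together with the observation $\rho(X_1^{\otimes k},\ldots,X_d^{\otimes k})=\rho(X_1,\ldots,X_d)^k$, then taking $k$-th roots and letting $k\to\infty$. Your added justification of the homogeneity identity via the mixed-product property and multiplicativity of the operator norm on Kronecker products is the right way to fill in the step the paper leaves to the reader.
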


\subsection{A Popescu-Rota-Strang theory of the outer spectral radius}
We can obtain
an analogue of a theorem on the joint spectral radius
of Rota and Strang\cite{rotastrang, rotamodel}
which states that:
	$$\rho(X_1,\ldots,X_d)= \inf_{N \in \mathfrak{N}} \sup_{i} \|X_i\|_N$$
where $\mathfrak{N}$ is the set of all consistent matrix norms on $M_n(\mathbb{C}).$
The outer spectral radius is the infimum over all ``two norms'' of the block matrix $\bbm X_1 & \ldots & X_d \ebm,$
which can be made formal as the infimum over all points simultaneously similar to $(X_1,\ldots,X_d)$ of the norm as a block matrix. 
A version of the Rota-Strang theory was developed by Popescu in \cite{PopescuSimilarity} for the row ball viewed the set of Kraus coefficients of iterable contractive completely positive maps which under translation
becomes a Rota-Strang theory of the outer spectral radius initiated by Blondel-Nesterov. (Apparently, that the two quantities were the same had not been realized.)
 One of our goals will be to give an elementary treatment of Popescu's Rota-Strang theory for the outer spectral radius on its own terms.
\begin{theorem}[Popescu \cite{PopescuSimilarity}]\label{mainresultcor}
Let $(X_1,\ldots, X_d) \in M_n(\mathbb{C})^d.$
	$$\hat{\rho}(X_1,\ldots,X_d)= \inf_{S\in GL_n(\mathbb{C})} \left\|\bbm SX_1S^{-1} & \ldots & SX_dS^{-1} \ebm\right\|_2.$$
\end{theorem}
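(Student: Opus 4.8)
The plan is to prove the two inequalities separately, using Theorem~\ref{outerspectralradiusformula} (the Gelfand-type formula) for the lower bound and a Lyapunov-type construction for the upper bound. For the easy direction, note first that conjugation does not change the outer spectral radius: since $\sum \flattensor{\overline{SX_iS^{-1}}}{SX_iS^{-1}} = (\overline{S}\otimes S)\bigl(\sum \flattensor{\overline{X_i}}{X_i}\bigr)(\overline{S}\otimes S)^{-1}$, the matrix $T$ is only conjugated, so its spectral radius is unchanged. Hence for any $S \in GL_n(\mathbb{C})$ one has $\hat\rho(X_1,\ldots,X_d) = \hat\rho(SX_1S^{-1},\ldots,SX_dS^{-1})$. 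Now I would observe that for any tuple $(Y_1,\ldots,Y_d)$, $\hat\rho(Y_1,\ldots,Y_d) \le \bigl\|\bbm Y_1 & \ldots & Y_d \ebm\bigr\|_2$: indeed $\sum \flattensor{\overline{Y_i}}{Y_i}$ is a positive semidefinite matrix whose trace equals $\sum \|Y_i\|_2^2 = \bigl\|\bbm Y_1 & \ldots & Y_d \ebm\bigr\|_2^2$ (using $\Tr(\flattensor{\overline{Y_i}}{Y_i}) = |\Tr Y_i|^2$... more carefully $\Tr(\overline{Y_i}\otimes Y_i)=\overline{\Tr Y_i}\,\Tr Y_i$, so one should instead argue via the $k=1$ case of Theorem~\ref{outerspectralradiusformula}: $\hat\rho \le \sup_{\sum|a_i|^2=1}\|\sum a_i Y_i\|$, and $\|\sum a_iY_i\| \le \|\bbm Y_1 & \cdots & Y_d \ebm\| \|(a_i)\|$ by Cauchy–Schwarz on the block row). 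Combining, $\hat\rho(X_1,\ldots,X_d) = \hat\rho(SX_iS^{-1}) \le \bigl\|\bbm SX_1S^{-1} & \cdots & SX_dS^{-1} \ebm\bigr\|_2$ for every $S$, giving $\hat\rho \le \inf_S(\cdots)$.

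For the reverse inequality I need, given $\eps>0$, to produce $S$ with $\bigl\|\bbm SX_1S^{-1} & \cdots & SX_dS^{-1}\ebm\bigr\|_2 \le \hat\rho + \eps$. Equivalently, writing $P = S^*S > 0$, I want a positive definite $P$ with $\sum X_i^* P X_i \preceq (\hat\rho+\eps)^2 P$, since $\bigl\|\bbm SX_1S^{-1} & \cdots \ebm\bigr\|_2^2 = \|\sum SX_iS^{-1}(SX_iS^{-1})^*\| = \|S(\sum X_i P^{-1} X_i^*)S^*\|$ — hmm, one must be careful whether it is $\sum X_i^*PX_i$ or $\sum X_iP^{-1}X_i^*$; the block-row $2$-norm squared is $\|\sum_i SX_iS^{-1}(S^{-*}X_i^*S^*)\| = \|S(\sum_i X_i P^{-1} X_i^*)S^*\|$, which is $\le c^2$ iff $\sum_i X_i P^{-1} X_i^* \preceq c^2 P^{-1}$, i.e. with $L = P^{-1}$ we want $\sum_i X_i L X_i^* \preceq c^2 L$, $L>0$. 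This is exactly the statement that $L - \sum_i X_i L X_i^*/c^2 \succeq 0$, i.e. $L$ is a Lyapunov solution for the completely positive map $\phi(H) = \sum X_i H X_i^*$ scaled by $c^2$. So the plan is to invoke Theorem~\ref{mainresult} (referenced in the excerpt as producing "a canonical choice of positive $L$ such that $L - \sum X_iLX_i^*$" is nice), applied to the rescaled tuple $X_i/(\hat\rho+\eps)$: since $\hat\rho(X_i/(\hat\rho+\eps)) < 1$, the associated CP map has outer spectral radius less than one, and the Rota-Strang construction yields a strictly positive $L$ with $\sum (X_i/(\hat\rho+\eps)) L (X_i/(\hat\rho+\eps))^* \preceq L$. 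Taking $S = L^{-1/2}$ then gives the desired bound up to $\eps$.

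The main obstacle is the reverse inequality, and specifically the existence of the positive definite Lyapunov solution $L$ when $\hat\rho < 1$. Abstractly this is where the real content lies: it is not merely $L - \phi(L)\succeq 0$ that one wants but $L$ \emph{strictly positive}, which requires controlling the kernel of the iteration, and this is precisely the delicate part of the Popescu-Rota-Strang theory that Theorem~\ref{mainresult} is set up to handle (one typically takes $L = \sum_{k\ge 0} \phi^k(I)$, which converges in norm because $\|\phi^k\|^{1/k} \to \hat\rho^2 < 1$ by the spectral radius formula for $T = \sum \flattensor{\overline{X_i}}{X_i}$, and is manifestly $\succeq I \succ 0$). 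Once $L$ is in hand the passage to $S$ and the norm estimate are routine. I would therefore structure the writeup as: (i) conjugation-invariance of $\hat\rho$; (ii) the $\le$ direction via Cauchy–Schwarz and the $k=1$ case of Theorem~\ref{outerspectralradiusformula}; (iii) the $\ge$ direction by reducing to a strict Lyapunov inequality and citing Theorem~\ref{mainresult} applied to the rescaled tuple; (iv) assembling the two bounds.
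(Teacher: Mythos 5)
Your proposal follows essentially the same route as the paper: the theorem is meant to follow directly from Theorem \ref{mainresult}, with the lower bound on the infimum coming from conjugation-invariance of $\hat\rho$ together with the fact that a row contraction has outer spectral radius less than one, and the upper bound coming from the Lyapunov solution $L$ of part (2) applied to the rescaled tuple $X_i/(\hat\rho+\eps)$, with $S=L^{-1/2}$. Your reduction of the block-row norm condition to $\sum X_iLX_i^*\preceq c^2L$ with $L=(S^*S)^{-1}$ is exactly the paper's $(2\Leftrightarrow 3)$ translation, and that half of your argument is fine.

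The one step that is not justified as written is the inequality $\hat\rho(Y_1,\ldots,Y_d)\le \sup_{\sum|a_i|^2=1}\|\sum a_iY_i\|$, which you attribute to ``the $k=1$ case of Theorem \ref{outerspectralradiusformula}.'' That theorem asserts a \emph{limit}, not an infimum, so the $k=1$ term of the sequence is not automatically an upper bound for the limit; you would need to know that the sequence $f_k=\sup_{\|a\|_2=1}\|\sum a_{i_1,\ldots,i_k}X_{i_1}\cdots X_{i_k}\|$ is submultiplicative (which is true, and provable via the Schur product property for $\psi$, but is not stated anywhere in the paper). The cleaner repair, and the paper's own route, is to use Proposition \ref{leftdirection} directly: if $\bigl\|\bbm SX_1S^{-1}&\cdots&SX_dS^{-1}\ebm\bigr\|_2=c$, then for any $c'>c$ the tuple $(SX_iS^{-1}/c')$ is a row contraction, so $\hat\rho(X_1,\ldots,X_d)=\hat\rho(SX_iS^{-1})<c'$, and letting $c'\downarrow c$ gives $\hat\rho\le c$. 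With that substitution your argument is complete and matches the paper's.
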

Theorem \ref{mainresultcor} follows directly from Theorem \ref{mainresult}.\black
%\red should we actually prove this \black 

We also note the following observation, which follows directly from the Rota-Strang theory, or perhaps even the classical Jordan decomposition.
\begin{theorem}
Let $(X_1,\ldots, X_d) \in M_n(\mathbb{C})^d.$
	$$\hat{\rho}(X_1,\ldots,X_d)=\inf_{N \in \mathfrak{N}}  \|\sum \overline{X_i} \otimes X_i\|_N.$$
\end{theorem}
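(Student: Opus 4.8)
The plan is to reduce the identity to the classical description of the spectral radius of a \emph{single} matrix as an infimum over consistent matrix norms, applied to the $n^{2}\times n^{2}$ matrix $T:=\sum\overline{X_{i}}\otimes X_{i}$. By the definition of the outer spectral radius, $\hat\rho(X_{1},\dots,X_{d})^{2}=\rho(T)$, so it suffices to prove, for an arbitrary square matrix $T$ and with $\mathfrak N$ the family of consistent (submultiplicative) matrix norms on the ambient algebra, that $\rho(T)=\inf_{N\in\mathfrak N}\norm{T}_{N}$; the asserted formula then follows on taking square roots.

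For the inequality $\rho(T)\le\norm{T}_{N}$ valid for every consistent $N$, I would use the standard device: if $\lambda$ is an eigenvalue of $T$ with eigenvector $v\ne0$, let $V$ be the matrix all of whose columns equal $v$, so that $TV=\lambda V$ and hence $\abs{\lambda}\,\norm{V}_{N}=\norm{TV}_{N}\le\norm{T}_{N}\norm{V}_{N}$, giving $\abs\lambda\le\norm{T}_{N}$; maximising over eigenvalues and then taking the infimum over $N$ yields $\rho(T)\le\inf_{N}\norm{T}_{N}$.

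For the reverse inequality I would fix $\eps>0$, put $T$ into Jordan form $T=PJP^{-1}$, and apply the usual diagonal-scaling argument. Conjugating $J$ by a diagonal matrix $D_{\delta}$ whose restriction to each Jordan block is $\diag(1,\delta,\delta^{2},\dots)$ replaces the superdiagonal $1$'s by $\delta$'s and leaves the diagonal untouched, so the maximal absolute row sum of $D_{\delta}^{-1}JD_{\delta}$ — that is, its $\ell^{\infty}$-induced operator norm — is at most $\rho(T)+\delta$. Setting $Q=PD_{\delta}$ and $\norm{A}_{N}:=\norm{Q^{-1}AQ}$ with $\norm{\cdot}$ the $\ell^{\infty}$ operator norm defines a consistent matrix norm (every operator norm induced by a vector norm is submultiplicative), and $\norm{T}_{N}=\norm{D_{\delta}^{-1}JD_{\delta}}\le\rho(T)+\delta$. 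Choosing $\delta\le\eps$ gives $\inf_{N}\norm{T}_{N}\le\rho(T)+\eps$, and letting $\eps\to0$ finishes the proof; taking square roots gives the theorem.

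I do not expect a genuine obstacle here, since the statement is the spectral radius formula for one matrix in disguise; the only points needing care are the verification that the rescaled-basis norm really is submultiplicative on all of $M_{n^{2}}(\mathbb C)$ — automatic because it is an operator norm — and the bookkeeping identity $\hat\rho(X_{1},\dots,X_{d})^{2}=\rho\bigl(\sum\overline{X_{i}}\otimes X_{i}\bigr)$ that converts the one-matrix statement into the stated tuple form. One could alternatively try to read the result off the Popescu--Rota--Strang theory of Theorem~\ref{mainresultcor}, pushing a similarity $S$ on $M_{n}(\mathbb C)$ forward to the similarity $\overline S\otimes S$ of $T$; but the per-coordinate quantities $\norm{\bbm SX_{1}S^{-1}&\dots&SX_{d}S^{-1}\ebm}_{2}^{2}$ and $\norm{(\overline S\otimes S)^{-1}T(\overline S\otimes S)}$ agree only after taking infima, so that route amounts to re-deriving the classical fact anyway and the Jordan-form argument is cleaner.
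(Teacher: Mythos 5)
Your reduction to the classical one-matrix identity $\rho(T)=\inf_{N}\norm{T}_{N}$ for $T=\sum\overline{X_i}\otimes X_i$ --- eigenvector trick for the lower bound, Jordan form plus diagonal scaling for the upper bound --- is exactly the route the paper intends: it offers no proof and simply remarks that the result ``follows directly from the Rota--Strang theory, or perhaps even the classical Jordan decomposition.'' Both halves of your argument for that classical fact are correct, including the observation that the rescaled norm is submultiplicative because it is an operator norm.

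The genuine problem is your last step. Since $\hat\rho(X_1,\dots,X_d)=\sqrt{\rho(T)}$ by definition, what you have actually established is $\hat\rho(X_1,\dots,X_d)^{2}=\inf_{N}\norm{T}_{N}$, equivalently $\hat\rho(X_1,\dots,X_d)=\inf_{N}\norm{T}_{N}^{1/2}$. ``Taking square roots'' does not yield the displayed identity $\hat\rho(X_1,\dots,X_d)=\inf_{N}\norm{T}_{N}$, because the right-hand side there carries no square root; read literally, the printed statement asserts $\sqrt{\rho(T)}=\rho(T)$, which fails whenever $\rho(T)\notin\{0,1\}$. For instance $d=1$, $X_1=2I$ gives $\hat\rho=2$ while $\inf_{N}\norm{\overline{X_1}\otimes X_1}_{N}=\rho(4\cdot 1)=4$. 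So the theorem as printed is missing a square (or a power $1/2$ on the right), and your argument proves the corrected version; you should state that correction explicitly rather than letting ``taking square roots'' paper over the mismatch. A minor further point: $\mathfrak N$ was defined in the paper as the set of consistent norms on $M_n(\mathbb C)$, whereas here it must mean norms on $M_{n^{2}}(\mathbb C)$; you handle this correctly by working in the ambient algebra of $T$.
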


A tuple $(X_1,\ldots, X_d) \in M_n(\mathbb{C})^d$ is called a \dfn{row contraction} if the block matrix
$\left[ \begin{matrix} X_1 & \ldots & X_d \end{matrix}\right]$ has $2$-norm strictly less than $1.$
Row contractions have been extensively studied for their dilation theoretic properties. (e.g \cite{frazho84,po89,po91,hkms09})
Row contractions are known to satify the following inequality\cite{po89}:
	$$\sup_{
\sum |a_{i_1,\ldots,i_k}|^2 = 1}
\left\|\sum_{1 \leq i_1, \ldots, i_k \leq d}  a_{i_1,\ldots,i_k}X_{i_1}\ldots X_{i_k} \right\|<1.$$
The inequality above gives a hint of the connection of row contractions and outer spectral radius, given the outer spectral radius formula
in Theorem \ref{outerspectralradiusformula}.

The outer spectral radius is characterized in terms of a Lyapunov type condition and equivalence with row contractions.
\begin{theorem}[Popescu \cite{PopescuSimilarity}]\label{mainresult}
	Let $(X_1,\ldots, X_d) \in M_n(\mathbb{C})^d.$
	The following are equivalent:
	\begin{enumerate}
	\item $\hat{\rho}(X_1,\ldots , X_d)<1.$
	\item There is a positive definite matrix $L$ such that
	$L - \sum X_iLX_i^*$ is positive definite.
	\item There is $S\in GL_n(\mathbb{C})$ such that $(SX_1S^{-1}, \ldots, SX_dS^{-1})$ is a row contraction.
	\end{enumerate}
	%There is $S\in GL_n(\mathbb{C})$ such that $(SX_1S^{-1}, \ldots, SX_dS^{-1})$ is a row contraction
	%if and only if $\hat{\rho}(X_1,\ldots , X_d)<1.$ 
\end{theorem}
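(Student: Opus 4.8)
The plan is to prove the cycle of implications $(1)\Rightarrow(2)\Rightarrow(3)\Rightarrow(1)$, working throughout with the completely positive map $\phi(H)=\sum_i X_iHX_i^*$ on $M_n(\mathbb{C})$. The starting point is the vectorization identity $\operatorname{vec}(AHB)=(B^T\otimes A)\operatorname{vec}(H)$, which shows that under the linear isomorphism $H\mapsto\operatorname{vec}(H)$ the map $\phi$ is carried to a matrix with the same spectrum as $T=\sum_i\overline{X_i}\otimes X_i$; hence $\rho(\phi)=\rho(T)=\hat\rho(X_1,\ldots,X_d)^2$. I will use repeatedly that $\phi$ is monotone, i.e.\ $0\le H_1\le H_2$ implies $0\le\phi(H_1)\le\phi(H_2)$, and that for Hermitian $H$ one has $\|\phi^k(H)\|\le\|H\|\,\|\phi^k(I)\|$ since $-\|H\|I\le H\le\|H\|I$ and $\phi^k$ preserves order.

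For $(1)\Rightarrow(2)$: since $\rho(\phi)=\hat\rho^2<1$, Gelfand's formula for the single operator $\phi$ on the finite-dimensional space $M_n(\mathbb{C})$ gives $\|\phi^k\|\le Cr^k$ for some $r<1$, so $L=\sum_{k\ge 0}\phi^k(I)$ converges in norm. It is a sum of positive semidefinite matrices with $L\ge I$, hence positive definite, and $\phi(L)=\sum_{k\ge 1}\phi^k(I)=L-I$, so $L-\phi(L)=I$ is positive definite, which is exactly (2). For $(2)\Rightarrow(3)$: given positive definite $L$ with $L-\phi(L)>0$, put $S=L^{-1/2}\in GL_n(\mathbb{C})$. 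Then
$$\sum_i (SX_iS^{-1})(SX_iS^{-1})^* = L^{-1/2}\Big(\sum_i X_iLX_i^*\Big)L^{-1/2}=L^{-1/2}\phi(L)L^{-1/2}<I,$$
so $\big\|\bbm SX_1S^{-1} & \ldots & SX_dS^{-1} \ebm\big\|_2<1$; that is, $(SX_1S^{-1},\ldots,SX_dS^{-1})$ is a row contraction.

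For $(3)\Rightarrow(1)$: first note $\hat\rho$ is invariant under simultaneous similarity, since $\sum_i\overline{SX_iS^{-1}}\otimes SX_iS^{-1}=(\bar S\otimes S)\,T\,(\bar S\otimes S)^{-1}$ has the same spectrum as $T$; so it suffices to show that a row contraction $(Y_1,\ldots,Y_d)$ satisfies $\hat\rho(Y_1,\ldots,Y_d)<1$. Let $\psi(H)=\sum_i Y_iHY_i^*$; then $\psi(I)=\sum_i Y_iY_i^*\le(1-\eps)I$ for some $\eps>0$, and by monotonicity $\psi^k(I)\le(1-\eps)^kI$, whence $\|\psi^k\|\le\|\psi^k(I)\|\to 0$ and $\rho(\psi)=\lim_k\|\psi^k\|^{1/k}\le 1-\eps<1$. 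Since $\hat\rho(Y_1,\ldots,Y_d)^2=\rho(\psi)$ by the vectorization remark, this yields (1). (Alternatively one could combine the Popescu inequality quoted above with Theorem \ref{outerspectralradiusformula}.)

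The bookkeeping with vectorization and the similarity invariance of $\hat\rho$ are routine. The one place that needs care is $(1)\Rightarrow(2)$: convergence of the Neumann series $\sum_k\phi^k(I)$ must be extracted from $\rho(\phi)<1$ via Gelfand's formula, not merely from $\phi$ being contractive in some norm; and, relatedly, the monotonicity arguments in $(3)\Rightarrow(1)$ rely on the fact that the spectral radius of $\phi$ on all of $M_n(\mathbb{C})$ is controlled by its action on positive matrices. This is precisely why routing the argument through the completely positive map $\phi$, rather than manipulating $T$ directly, is the efficient path.
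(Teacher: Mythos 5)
Your proof is correct, and its skeleton coincides with the paper's: the same cycle of implications, the same Lyapunov matrix (your Neumann series $L=\sum_{k\ge 0}\phi^k(I)$ is exactly the paper's $L=Q(P)$ for the elementary Pick matrix $P=[(1-\sum\overline{X_i}\otimes X_i)^{-1}]^\psi$, term by term), and the same similarity $S=L^{-1/2}$ for $(2)\Leftrightarrow(3)$. Where you genuinely diverge is in the machinery and the logical dependencies. The paper routes everything through the $\psi$-involution and the partial trace $Q$: positivity of $L$ comes from the Schur product property of $\psi$, the identity $L-\sum X_iLX_i^*=I$ from $4$-modularity, and the implication $(3)\Rightarrow(1)$ is closed by invoking the outer spectral radius formula (Theorem \ref{outerspectralradiusformula}). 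You instead use the single observation $\vecc(\phi(H))=T\vecc(H)$, so that $\rho(\phi)=\rho(T)=\hat\rho^2$, and then run Gelfand's formula and order-monotonicity for the completely positive map $\phi$ directly; in particular you never need Theorem \ref{outerspectralradiusformula}. Your route is more elementary and self-contained; the paper's buys the extra structural information about $P$ (its positivity, and the fact that its columns span the algebra generated by the $X_i$) that is reused in the later dynamical sections. One small slip worth fixing: from $-\|H\|I\le H\le\|H\|I$ for Hermitian $H$ you get $\|\psi^k\|\le 2\|\psi^k(I)\|$ on all of $M_n(\mathbb{C})$ (split a general matrix into Hermitian and anti-Hermitian parts), not $\|\psi^k\|\le\|\psi^k(I)\|$ as written; the constant is harmless in the Gelfand limit, but the inequality as stated is only justified on the Hermitian part.
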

Theorem \ref{mainresult} is proved in several parts.
The implication  $(1 \Rightarrow 2)$ is part 3 of Proposition  \ref{elempickprop} which explicitly constructs a canonical rationally computable choice for $L$ for which $L - \sum X_iLX_i^*=I$.
The implication $(3 \Rightarrow 1)$  follows from Proposition \ref{leftdirection}.
The equivalence $(2 \Rightarrow 3)$ follows by letting $S = L^{-1/2}$ and $(3 \Rightarrow 2)$ by setting $L = (S^*S)^{-1}.$

From the point of view of free analysis or noncommutative function theory, Theorem \ref{mainresult} is interesting because it shows that any map defined on the set of row contractions extends to all
outer spectral contractions, which includes other domains of interest such as the column contractions. (The author was
originally concerned with these kind of considerations.)
%We note that, in the case $d=1,$ our results essentially follow from the Jordan canonical form.

\section{Preliminaries}
We fix the notation that $I$ is the identity matrix in $M_n(\mathbb{C})$ and $1$ is the identity matrix in $M_{n^2}(\mathbb{C}).$

We will use the following matrix ordering:
for two self-adjoint matrices $A, B \in M_m(\mathbb{C}),$ we say that $A \leq B$ if $B-A$ is positive semidefinite and
we say that $A < B$ if $B-A$ is positive definite. Notably, the notation $A > 0$ means that $A$ is positive definite
and $A \geq 0$ means that $A$ is positive semidefinite.

We adopt a ``vertical tensor notation" to conserve space and enhance visual symmetry during calculations:
	$$\tensor{A}{B} = \flattensor{A}{B}.$$
To proceed we will need to define and describe two important maps: the $\psi$ involution and the partial trace $Q$.

\subsection{The $\psi$ involution}

The \dfn{$\psi$ involution} is the linear map $\psi: M_{n^2}(\mathbb{C}) \rightarrow M_{n^2}(\mathbb{C})$
defined by the relations
	$$\psi(E_{i+(j-1)n,k+(l-1)n}) = E_{i+(k-1)n, j+(l-1)n}$$
where $E_{a,b}$ is the matrix with a $1$ at the $(a,b)$ entry and $0$ everywhere else. Informally,
the $\psi$ involution swaps the positions of $j$ and $k.$
Since $E_{a,b}$ form a basis for  $M_{n^2}(\mathbb{C})$ we can extend the map $\psi$
by linearity to all of $M_{n^2}(\mathbb{C}).$
We formally adopt the notation $$A^\psi = \psi(A).$$
The \dfn{vectorization map} is the linear map $\vecc:M_{n}(\mathbb{C}) \rightarrow \mathbb{C}^{n^2}$
defined by the relations
	$$\vecc E_{i,j} = e_{i+(j-1)n}$$
where $e_k$ is the $k$-th elementary basis vector of $\mathbb{C}^{n^2}.$ We extend $\vecc$ to all
of $M_{n}(\mathbb{C})$ by linearity. The vectorization map is used throughout matrix theory, see \cite{horjoh85}.

The $\psi$ involution will be very useful to us in our analysis of the outer spectral radius and row contractions.
It is especially useful given the following proposition, which shows that the $\psi$ involution has a rich algebraic structure. The $\psi$ involution was used to develop algorithms for understanding finite dimensional matrix algebras \cite{PascoeAlgDim}.
\begin{proposition}\label{psiproperties}
The map $\psi$ satisfies the following properties for any $A, B, C, D \in M_n(\mathbb{C})$
and $E, F \in M_{n^2}(\mathbb{C})$:
\begin{description}
	\item[The map $\psi$ is an involution] $$[E^\psi]^{\psi} = E.$$
	\item[The map $\psi$ takes twisted tensor products to outer products]
	$$\left(\tensor{\overline{A}}{B}\right)^{\psi} = (\vecc B)(\vecc {A})^*,$$
	\item[4-modularity] $$\tensor{\overline{A}}{B}E^{\psi}\tensor{{C}}{D^*} = 
	\left[\tensor{\overline{D}}{B}E\tensor{{C}}{A^*}\right]^{\psi},$$
	\item[Schur Product Property] $$E \geq 0, F\geq 0 \Rightarrow [E^\psi F^\psi]^\psi \geq 0.$$ (Here $G \geq 0$ means that
	$G$ is positive semidefinite.)
\end{description}
\end{proposition}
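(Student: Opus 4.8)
The plan is to verify each of the four properties by reducing everything to the action of $\psi$ on the matrix units $E_{i+(j-1)n,\,k+(l-1)n}$, since $\psi$ is defined and linear on that basis. I would set up once and for all the index dictionary: a matrix unit of $M_{n^2}(\mathbb{C})$ is indexed by a pair of "composite" indices $(i,j)$ and $(k,l)$ via $E_{i+(j-1)n,\,k+(l-1)n}$, and $\psi$ sends this to $E_{i+(k-1)n,\,j+(l-1)n}$, i.e.\ it transposes the two "inner" labels $j$ and $k$ while fixing the two "outer" labels $i$ and $l$. The involution property is then immediate: applying the swap $j\leftrightarrow k$ twice returns the original unit, so $[E^\psi]^\psi=E$ on a basis, hence everywhere by linearity.

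For the second property, I would expand $\tensor{\overline A}{B}=\overline A\otimes B$ in the matrix-unit basis. Writing $\overline A=\sum_{i,k}\overline{a_{ik}}E_{ik}$ (so the $(i,k)$ entry of $\overline A$ is $\overline{a_{ik}}$) and $B=\sum_{j,l}b_{jl}E_{jl}$, the Kronecker product is $\sum_{i,j,k,l}\overline{a_{ik}}\,b_{jl}\,E_{i+(j-1)n,\,k+(l-1)n}$ under the standard convention that matches the $\vecc$ map declared in the text. Applying $\psi$ term by term gives $\sum \overline{a_{ik}}\,b_{jl}\,E_{i+(k-1)n,\,j+(l-1)n}$, and I would recognize this as exactly $(\vecc B)(\vecc A)^*$: the $(i+(k-1)n,\ j+(l-1)n)$ entry of $(\vecc B)(\vecc A)^*$ is $(\vecc B)_{i+(k-1)n}\overline{(\vecc A)_{j+(l-1)n}}=b_{ik}\,\overline{a_{jl}}$ — so here I must be careful about which index pair plays which role, and I would pin down the indexing convention (column-stacking vs.\ row-stacking in $\vecc$) so that the bookkeeping in $\vecc E_{i,j}=e_{i+(j-1)n}$ is consistent with the Kronecker-product convention; this is the one genuinely error-prone spot. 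Once the conventions are fixed, the identity is a direct comparison of entries.

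The 4-modularity identity I would prove by linearity in $E$: it suffices to check it for $E=E_{p+(q-1)n,\,r+(s-1)n}$, and then both sides are (up to scalars coming from the entries of $A,B,C,D$) a single matrix unit, whose composite row/column indices I track through the left-multiplication by $\overline A\otimes B$, the $\psi$ on the inside, and the right-multiplication by $C\otimes D^*$. The claim is that the net permutation of labels on the left side equals "do the analogous product with $A$ and $D$ swapped, then apply $\psi$" — this is a finite check that the two index permutations agree. Alternatively, and more cleanly, I would combine the already-proved second property with the elementary identities $(P\otimes Q)(\vecc M)=\vecc(QMP^{\mathsf T})$ and $(\vecc M)(\vecc N)^*$-type manipulations to turn both sides into the same rank-structured object; I expect this route to be shorter but to rely on the same convention-pinning as above. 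Finally, the Schur Product Property is the real payoff and I would derive it from the previous two: if $E,F\ge 0$ write $E=\sum_k (\vecc C_k)(\vecc C_k)^*$ and $F=\sum_m (\vecc D_m)(\vecc D_m)^*$ using that any PSD matrix in $M_{n^2}$ is a sum of rank-one projections onto vectors, each of which is $\vecc$ of some matrix; then by the second property $E^\psi=\sum_k \bigl(\tensor{\overline{C_k}}{C_k}\bigr)$-type terms — more precisely $E^\psi$ is a sum of $\overline{C_k}\otimes C_k$ up to the involution — and I compute $[E^\psi F^\psi]^\psi$ using 4-modularity to see it equals $\sum_{k,m}$ of terms of the form $\overline{(C_k D_m^{\text{-ish}})}\otimes(\cdots)$, manifestly a sum of $\overline{Z}\otimes Z$, hence PSD because each $\overline Z\otimes Z=(\overline{Z}\otimes I)(I\otimes Z)$ conjugates $I$... more simply, $\overline Z\otimes Z = \overline{(I\otimes Z)}^{\,}$-free: $\langle (\overline Z\otimes Z)v,v\rangle = \|(\text{something})\|^2\ge0$ via the $\vecc$ identification $(\overline Z\otimes Z)\vecc M=\vecc(ZMZ^*)$ and the fact that $\langle \vecc(ZMZ^*),\vecc M\rangle=\Tr(Z^*\,\text{-})\ge0$ when $M\ge0$.

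The main obstacle I anticipate is purely combinatorial: keeping the four-fold index bookkeeping straight and consistent between the definition of $\psi$, the definition of $\vecc$, and the ambient Kronecker-product convention, so that the "swap $j\leftrightarrow k$" picture lines up with "$\psi$ turns $\overline A\otimes B$ into $(\vecc B)(\vecc A)^*$" with the correct placement of bars and transposes. Once that dictionary is fixed, properties (1)–(3) are mechanical, and (4) follows formally. I would therefore spend the bulk of the write-up establishing the index conventions cleanly and then let (4) fall out of (2)+(3) together with the spectral decomposition of PSD matrices and the standard $\vecc$ identity $(P\otimes Q)\vecc M=\vecc(QMP^{\mathsf T})$.
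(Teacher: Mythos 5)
Your treatment of the first three properties is exactly the paper's (reduce to matrix units, pin the $\vecc$/Kronecker conventions, check the index permutations), so no issue there. The problem is the final step of the Schur Product Property. After writing $E=\sum_k(\vecc C_k)(\vecc C_k)^*$ and $F=\sum_m(\vecc D_m)(\vecc D_m)^*$, so that $E^\psi=\sum_k\flattensor{\overline{C_k}}{C_k}$ and $F^\psi=\sum_m\flattensor{\overline{D_m}}{D_m}$, multiplicativity of the Kronecker product gives
\begin{equation*}
E^\psi F^\psi=\sum_{k,m}\flattensor{\overline{C_kD_m}}{C_kD_m}.
\end{equation*}
This object is \emph{not} positive semidefinite in general: $\flattensor{\overline{Z}}{Z}$ is not even Hermitian unless $Z^*$ is a scalar multiple of $Z$ (its adjoint is $\flattensor{Z^{\mathsf T}}{Z^*}$), so your claim that the result is ``manifestly a sum of $\overline{Z}\otimes Z$, hence PSD'' is false, and both of your attempted justifications fail — $(\overline{Z}\otimes I)(I\otimes Z)$ is a product, not a conjugation of $I$, and the inner-product computation $\langle(\overline{Z}\otimes Z)\vecc M,\vecc M\rangle=\Tr(M^*ZMZ^*)$ is only nonnegative for $M\geq 0$, not for arbitrary vectors, so it does not establish positive semidefiniteness.

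The fix is one more application of the second property, which you already proved: the statement asks about $[E^\psi F^\psi]^\psi$, and that outer $\psi$ converts each twisted tensor square back into an outer product,
\begin{equation*}
[E^\psi F^\psi]^\psi=\sum_{k,m}\bigl(\vecc (C_kD_m)\bigr)\bigl(\vecc (C_kD_m)\bigr)^*\geq 0,
\end{equation*}
which is a sum of rank-one positive semidefinite matrices. This is precisely the paper's argument (done there for $E,F$ rank one). So the positivity comes from the final $\psi$, not from any intrinsic positivity of $\flattensor{\overline{Z}}{Z}$; as written, your last step asserts a false intermediate claim and the proof does not close.
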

The proof of Proposition \ref{psiproperties} is elementary and left to the reader.
(For the first three items, it is enough
to check the identity on elementary matrices.
For the Schur product property, it is enough to check the inequality when $E$ and $F$ are rank $1.$)

We note that the Schur Product Property is particularly interesting since it provides a multiplication on
$M_{n^2}(\mathbb{C})$ which preserves positivity, much like the classical Schur product. (The Schur product is also known as
the Hadamard product, or entry-wise multiplication.)

In fact, the Schur Product Property can actually be used to prove the Schur product theorem for the
classical Schur product.
Consider the map
$\tau: M_n(\mathbb{C}) \rightarrow M_{n^2}(\mathbb{C})$
defined by
$$\tau(E_{i,j}) = \tensor{E_{i,j}}{E_{i,j}}.$$
One can show that $A \geq 0$ if and only if $\tau(A) \geq 0,$ and, moreover, one can show that
$$A \circ B = \tau^{-1}\left([\tau(A)^\psi\tau(B)^\psi]^\psi\right)$$
where $A \circ B$ denotes the classical Schur product.

We also note that the properties of taking twisted tensor product to outer products and 4-modularity are equivalent to the definition of the
$\psi$ involution up to multiplication by a constant.

\subsection{The partial trace $Q$}

Another useful map for us will be the \dfn{partial trace $Q$} which is defined to be the map
$Q: M_{n^2}(\mathbb{C}) \rightarrow M_{n}(\mathbb{C})$ which satisfies
	$$Q(E_{i+n(j-1),k+n(l-1)})=\chi_{jl}E_{i,k}.$$
where $\chi_{ab}$ equals $1$ if $a= b$ and $0$ otherwise.

\begin{proposition}\label{Qproperties}
The map $Q$ satisfies the following properties for any $A, B \in M_n(\mathbb{C})$
and $E, F \in M_{n^2}(\mathbb{C})$:
\begin{description}
	\item[The $\psi$-identity identity]
		$$Q([1]^\psi) = I.$$
	\item[The vectorization identity]
		$$Q([\vecc A][\vecc {B}]^*) = AB^*.$$
	\item[Modularity] $$Q\left(\tensor{I}{A}E\tensor{I}{B*}\right) = 
	AQ(E)B^*.$$
	\item[The map $Q$ is positive] $$E \geq 0 \Rightarrow Q(E) \geq 0.$$
	\item[Product property] $$Q(E) \geq 0, F \geq 0 \Rightarrow Q([F^\psi E^\psi]^\psi)\geq 0.$$
\end{description}
\end{proposition}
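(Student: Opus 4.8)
The plan is to establish the five assertions in the order listed, each leaning on its predecessors. The computational heart of everything is the observation that, under the identification $E_{i+n(j-1),k+n(l-1)} = \tensor{E_{j,l}}{E_{i,k}}$ coming from the conventions fixed earlier, the defining relation for $Q$ reads $Q(\tensor{P}{R}) = \tr(P)\,R$ on simple tensors; that is, $Q$ is the partial trace over the first tensor leg. \textbf{Modularity} is then immediate from $\tensor{I}{A}(\tensor{P}{R})\tensor{I}{B^*} = \tensor{P}{ARB^*}$, so that $Q$ of it equals $\tr(P)ARB^* = A(\tr(P)R)B^* = A\,Q(\tensor{P}{R})\,B^*$, and one extends by linearity in $E$. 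The \textbf{vectorization identity} is checked on basis elements: $[\vecc E_{i,k}][\vecc E_{j,l}]^* = E_{i+(k-1)n,\,j+(l-1)n}$, whose image under $Q$ is $\chi_{kl}E_{i,j} = E_{i,k}E_{j,l}^*$; since each side is linear in $A$ and antilinear in $B$, this is enough.

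The \textbf{$\psi$-identity identity} then drops out: writing $1 = \sum_{i,j} E_{i+(j-1)n,\,i+(j-1)n}$ and applying the definition of $\psi$ to each summand gives $[1]^\psi = \sum_{i,j} E_{i+(i-1)n,\,j+(j-1)n} = \big(\sum_i \vecc E_{i,i}\big)\big(\sum_j \vecc E_{j,j}\big)^* = (\vecc I)(\vecc I)^*$, so $Q([1]^\psi) = II^* = I$ by the vectorization identity. \textbf{Positivity of $Q$} also follows from the vectorization identity: any $E\ge 0$ is a finite sum $\sum_k v_kv_k^*$, and since $\vecc$ is a linear isomorphism $M_n(\mathbb{C})\to\mathbb{C}^{n^2}$ we may write $v_k = \vecc C_k$, whence $Q(E) = \sum_k C_kC_k^* \ge 0$.

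The only step with any content is the \textbf{product property}, and for it I would use the algebraic identities for $\psi$ from Proposition \ref{psiproperties}. Since the map $F \mapsto Q([F^\psi E^\psi]^\psi)$ is linear in $F$ and every positive semidefinite $F$ is a sum of rank-one positive semidefinite matrices, it suffices to treat $F = (\vecc C)(\vecc C)^*$ for some $C\in M_n(\mathbb{C})$. By the ``twisted tensor products to outer products'' property, $(\vecc C)(\vecc C)^* = (\tensor{\overline C}{C})^\psi$, so $F^\psi = \tensor{\overline C}{C}$ because $\psi$ is an involution. Writing $F^\psi E^\psi = \tensor{\overline C}{C}\,E^\psi\,\tensor{I}{I^*}$ (with $\tensor{I}{I^*}$ the identity of $M_{n^2}(\mathbb{C})$) and invoking 4-modularity yields $F^\psi E^\psi = \big[\tensor{I}{C}\,E\,\tensor{I}{C^*}\big]^\psi$, and a second application of $\psi$ (using the involution property) gives $[F^\psi E^\psi]^\psi = \tensor{I}{C}\,E\,\tensor{I}{C^*}$. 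Modularity now gives $Q([F^\psi E^\psi]^\psi) = C\,Q(E)\,C^* \ge 0$ since $Q(E)\ge 0$. The main obstacle here is purely one of bookkeeping — recognizing that the rank-one reduction, followed by the chain twisted-tensor-to-outer-product, 4-modularity, involution, modularity, collapses the whole expression to $C\,Q(E)\,C^*$ — rather than anything analytically delicate.
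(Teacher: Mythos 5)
Your proof is correct and follows essentially the same route as the paper's: basis/rank-one reductions for the linear and positivity statements, and for the product property the identical chain (write $F=(\vecc C)(\vecc C)^*$, use the outer-product identity, 4-modularity and the involution to get $[F^\psi E^\psi]^\psi=\tensor{I}{C}E\tensor{I}{C^*}$, then apply modularity of $Q$). Your observation that $Q(\tensor{P}{R})=\tr(P)\,R$ is a pleasant way to package the index bookkeeping for modularity and the $\psi$-identity identity, but it does not change the substance of the argument.
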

We leave the above properties as an exercise to the reader.

\section{The outer spectral radius formula}\label{outerspectralradiusformulaproof}

Now we show the outer spectral radius formula given as Theorem \ref{outerspectralradiusformula}.
\begin{proof}[Proof of Theorem \ref{outerspectralradiusformula}]
Recall $$\hat{\rho}(X_1,\ldots , X_d) = \sqrt{\rho\left(\sum \tensor{\overline{X_i}}{X_i}\right)}.$$
Let $W = \sum \tensor{\overline{X_i}}{X_i}.$
By the classical spectral radius formula, it is enough to calculate
	$$\sqrt{\lim_{k\rightarrow \infty} \|W^k\|_F^{1/k}}$$
	where $\|\cdot\|_F$ denotes the Frobenius norm.
Note that $$W^k = \sum_{1 \leq i_1, \ldots, i_k \leq d}
\tensor{\overline{X_{i_1}\ldots X_{i_k}}}{X_{i_1}\ldots X_{i_k}}.$$
So, applying Proposition \ref{psiproperties},
\begin{align*}
	(W^k)^\psi & = \sum_{1 \leq i_1, \ldots, i_k \leq d}
(\text{vec }X_{i_1}\ldots X_{i_k})(\text{vec }{X_{i_1}\ldots X_{i_k}})^*.
\end{align*}
Let $V_k$ be the $n^2$ by $d^k$ with columns $\text{vec }X_{i_1}\ldots X_{i_k}.$
Observe that
	\begin{align*}\|V_k\|_2 &= \sup_{
\sum |a_{i_1,\ldots,i_k}|^2 = 1}
\left\|\sum_{1 \leq i_1, \ldots, i_k \leq d}  a_{i_1,\ldots,i_k}\vecc X_{i_1}\ldots X_{i_k} \right\|, \\
& = \sup_{
\sum |a_{i_1,\ldots,i_k}|^2 = 1}
\left\|\sum_{1 \leq i_1, \ldots, i_k \leq d}  a_{i_1,\ldots,i_k}X_{i_1}\ldots X_{i_k} \right\|_F .
\end{align*}
Note $V_kV_k^* = (W^k)^\psi.$
So $$\|(W^k)^\psi\|_2 = \sup_{
\sum |a_{i_1,\ldots,i_k}|^2 = 1}
\left\|\sum_{1 \leq i_1, \ldots, i_k \leq d}  a_{i_1,\ldots,i_k}X_{i_1}\ldots X_{i_k} \right\|_F^2 .
$$
Observe $\|(W^k)^\psi\|_F = \|W^k\|_F$ and, so,
	$$\|(W^k)^\psi\|_2 \leq \|W^k\|_F \leq n\|(W^k)^\psi\|_2.$$
Namely, we see that
\begin{align*}
	\lim_{k\rightarrow \infty}\|W^k\|_F^{1/2k}& = \lim_{k\rightarrow \infty}\|(W^k)^\psi\|_2^{1/2k} \\
	&= \lim_{k\rightarrow \infty}\sup_{
\sum |a_{i_1,\ldots,i_k}|^2 = 1}
\left\|\sum_{1 \leq i_1, \ldots, i_k \leq d}  a_{i_1,\ldots,i_k}X_{i_1}\ldots X_{i_k} \right\|_F^{1/k}.
\end{align*}

\end{proof}

%Since it is an elementary fact\cite{po91} about row contractions that $$\sup_{
%\sum |a_{i_1,\ldots,i_k}|^2 = 1}
%\left\|\sum_{1 \leq i_1, \ldots, i_k \leq d}  a_{i_1,\ldots,i_k}X_{i_1}\ldots X_{i_k} \right\|_2
%\leq \left\|\bbm X_1 & \ldots &  X_d \ebm\right\|_2^k,$$
%we immediately obtain the following corollary. %\clarify

We now prove a proposition which immediately implies $(3 \Rightarrow 1)$ from Theorem \ref{mainresult} when combined with the
observation that for any $S \in GL_n(\mathbb{C}),$ $$\hat{\rho}(X_1,\ldots , X_d) = \hat{\rho}(SX_1S^{-1},\ldots , SX_dS^{-1}).$$
\begin{proposition}\label{leftdirection}
	If $(X_1,\ldots, X_d) \in M_n(\mathbb{C})^d$ is a row contraction, then
	$\hat{\rho}(X_1,\ldots , X_d)<1.$
\end{proposition}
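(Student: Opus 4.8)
The plan is to reduce the claim to the classical spectral radius formula for the single matrix $W = \sum \tensor{\overline{X_i}}{X_i}$, exactly as in the proof of Theorem~\ref{outerspectralradiusformula}. Recall $\hat{\rho}(X_1,\ldots,X_d) = \sqrt{\rho(W)}$, and by the Gelfand formula $\rho(W) = \lim_{k\to\infty}\|W^k\|^{1/k}$ in any matrix norm; I will use the operator $2$-norm. So it suffices to show $\|W^k\|_2 < 1$ for some $k$, or more simply that $\|W^k\|_2$ stays bounded (in fact decays) enough to force $\rho(W) < 1$.

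First I would record the key computation from the proof of Theorem~\ref{outerspectralradiusformula}: applying $\psi$ and its properties from Proposition~\ref{psiproperties}, one has $(W^k)^\psi = V_k V_k^*$ where $V_k$ is the $n^2 \times d^k$ matrix whose columns are $\vecc(X_{i_1}\cdots X_{i_k})$, and hence $\|(W^k)^\psi\|_2 = \|V_k\|_2^2$ equals the supremum of $\bigl\|\sum a_{i_1,\ldots,i_k} X_{i_1}\cdots X_{i_k}\bigr\|_F^2$ over $\sum|a_{i_1,\ldots,i_k}|^2 = 1$. Next I would invoke the inequality satisfied by row contractions recalled in the excerpt (from \cite{po89}): if $(X_1,\ldots,X_d)$ is a row contraction then, writing $c := \|\bbm X_1 & \ldots & X_d\ebm\|_2 < 1$, one in fact has the stronger bound
$$
\sup_{\sum|a_{i_1,\ldots,i_k}|^2 = 1}\left\|\sum_{1\le i_1,\ldots,i_k\le d} a_{i_1,\ldots,i_k} X_{i_1}\cdots X_{i_k}\right\|_2 \le c^k,
$$
which follows because the block row with entries $X_{i_1}\cdots X_{i_k}$ (indexed by all multi-indices of length $k$) is the $k$-fold ``ampliated'' product of the original row contraction and so has operator norm at most $c^k$. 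Since $\|\cdot\|_F \le \sqrt{n}\,\|\cdot\|_2$ on $M_n(\mathbb{C})$, this gives $\|(W^k)^\psi\|_2 \le n c^{2k}$, and since $\psi$ preserves the Frobenius norm and $\|W^k\|_2 \le \|W^k\|_F = \|(W^k)^\psi\|_F \le n\|(W^k)^\psi\|_2 \le n^2 c^{2k}$, I conclude $\rho(W) = \lim_k \|W^k\|_2^{1/k} \le c^2 < 1$, hence $\hat{\rho}(X_1,\ldots,X_d) = \sqrt{\rho(W)} \le c < 1$.

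The one step that needs a little care — and which I expect to be the main (though still routine) obstacle — is justifying the row-contraction inequality with the explicit constant $c^k$, i.e. that the length-$k$ word operator $\bbm \cdots & X_{i_1}\cdots X_{i_k} & \cdots\ebm$ (viewed as a single block-row operator from $(\mathbb{C}^n)^{\oplus d^k}$ to $\mathbb{C}^n$) has norm $\le c^k$. This is the standard fact that if $R = \bbm X_1 & \ldots & X_d\ebm$ has $\|R\|_2 = c$, then the $k$-fold iterated row $R_k$ obtained by repeatedly substituting $R$ into each slot satisfies $\|R_k\|_2 = \|R\,(I_n \otimes R)\,(I_n\otimes I_n\otimes R)\cdots\|_2 \le c^k$, since each factor $I_{n^j}\otimes R$ has norm $c$; this is exactly the content of the cited inequality from \cite{po89}, so I would simply cite it rather than reprove it. Everything else is a direct assembly of Proposition~\ref{psiproperties}, the norm equivalence $\|\cdot\|_2 \le \|\cdot\|_F \le \sqrt{n}\|\cdot\|_2$, and the Gelfand formula.
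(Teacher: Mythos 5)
Your proof is correct, but it reaches the key estimate by a genuinely different route than the paper. Both arguments reduce to showing that $\sup_{\sum|a_{i_1,\ldots,i_k}|^2=1}\bigl\|\sum a_{i_1,\ldots,i_k}X_{i_1}\cdots X_{i_k}\bigr\|$ decays geometrically in $k$ and then invoking Theorem \ref{outerspectralradiusformula} (equivalently, the Gelfand formula for $W$ via $(W^k)^\psi=V_kV_k^*$). The paper obtains the decay from the partial trace: it notes $Q(W^\psi)=\sum X_iX_i^*\le t<1$, uses the product property of $Q$ to get $Q([W^k]^\psi)\le t^k$ by induction, and then compares a top eigenvector of $(W^k)^\psi$ against $Q([W^k]^\psi)$. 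You instead obtain it from the factorization of the iterated row, $R_k=R(I\otimes R)\cdots(I\otimes R)$ with $R=\bbm X_1 & \ldots & X_d\ebm$, giving $\|R_k\|_2\le c^k$ and hence $\bigl\|\sum a_{i_1,\ldots,i_k}X_{i_1}\cdots X_{i_k}\bigr\|_2=\|R_k(a\otimes I_n)\|_2\le c^k$; this is precisely the Popescu inequality that the paper quotes from \cite{po89} as motivation but does not actually use in its proof. Your route is more elementary in that it avoids the map $Q$ entirely and makes the constant $c=\|R\|_2$ explicit, yielding the clean bound $\hat{\rho}\le c$; the paper's route exercises the $Q$/$\psi$ calculus that it reuses later (e.g.\ in Proposition \ref{elempickprop}). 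Two minor points: the ampliation factors should be identities of size $d^j$ (i.e.\ $I_{d^j}\otimes R$ acting on $d^j$ copies of $\mathbb{C}^{nd}$), not $I_{n^j}\otimes R$, though this does not affect the norm bound; and the ending can be shortened by citing Theorem \ref{outerspectralradiusformula} directly, since $\sup\bigl\|\sum a_{i_1,\ldots,i_k}X_{i_1}\cdots X_{i_k}\bigr\|^{1/k}\le c$ already gives $\hat{\rho}(X_1,\ldots,X_d)\le c<1$ without passing back through the Frobenius-norm comparisons.
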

\begin{proof}
	Let $W$ be as in the proof of Theorem \ref{outerspectralradiusformula}.
	Since $(X_1,\ldots, X_d) \in M_n(\mathbb{C})^d$ is a row contraction, $\sum X_iX_i^* < 1.$
	So, $\sum X_iX_i^* \leq t$ for some $0< t <1.$
	Note $Q(W^\psi) =\sum X_iX_i^*.$
	So $Q([t-W]^\psi) \geq 0$, so, since $[W^k]^\psi\geq 0$, by the product property of $Q$,
		$$Q([tW^k-W^{k+1}]^\psi) = Q([W^k(t-W)]^\psi)\geq 0.$$
	That is,
		$$Q([W^{k+1}]^\psi) \leq tQ([W^k]^\psi).$$
	Inductively, $Q([W^{k}]^\psi) \leq t^kQ(1^\psi) = t^k.$
	Let $\text{vec }u_1(X_1,\ldots,X_d)$ be an eigenvector corresponding to the maximum eigenvalue of $(W^k)^\psi$
	normalized such that
	$$u_1(X_1,\ldots,X_d)=  \sum_{1 \leq i_1, \ldots, i_k \leq d}  a_{i_1,\ldots,i_k}X_{i_1}\ldots X_{i_k}$$ where
	$\sum_{1 \leq i_1, \ldots, i_k \leq d}  |a_{i_1,\ldots,i_k}|^2 = 1$.
	Note that
	$$\left\|\sum_{1 \leq i_1, \ldots, i_k \leq d}  a_{i_1,\ldots,i_k}X_{i_1}\ldots X_{i_k} \right\|_F$$
	is maximized subject to the constraint $\sum_{1 \leq i_1, \ldots, i_k \leq d}  |a_{i_1,\ldots,i_k}|^2 = 1$ by $u_1.$
	Note,
	$$(\text{vec }u_1(X_1,\ldots,X_d))(\text{vec }u_1(X_1,\ldots,X_d))^* \leq (W^k)^\psi.$$ 
	Applying $Q,$ using the vectorization identity and the positivity of $Q$ from Proposition \ref{Qproperties}, we get that
		\begin{align*}u_1(X_1,\ldots,X_d)u_1(X_1,\ldots,X_d)^* & \leq Q([W^k]^\psi) \\& \leq t^k.\end{align*}

	Now, we get that \begin{align*}
		\hat{\rho}(X_1,\ldots, X_d) & = \lim_{k\rightarrow \infty} \sup_{
\sum |a_{i_1,\ldots,i_k}|^2 = 1}
\left\|\sum_{1 \leq i_1, \ldots, i_k \leq d}  a_{i_1,\ldots,i_k}X_{i_1}\ldots X_{i_k} \right\|^{1/k}  \\&\leq
\lim_{k\rightarrow \infty} (t^k)^{1/k} \\ &=  t \\& <1.\end{align*}
	So, by Theorem \ref{outerspectralradiusformula} we are done.
\end{proof}

\section{Construction of the Lyapunov matrix}
 
We will now begin work towards showing the implication $(1 \Rightarrow 2)$ Theorem \ref{mainresult}, which we will accomplish
through an algebraic construction of the matrix $L.$
 
Let $(X_1,\ldots, X_d) \in M_n(\mathbb{C})^d$ such that $\hat{\rho}(X_1,\ldots , X_d)<1.$
We define the \dfn{elementary Pick matrix} corresponding to $(X_1,\ldots, X_d)$ to be 
	$$P=\left[\left(1- \sum \tensor{\overline{X_i}}{X_i}\right)^{-1}\right]^{\psi}.$$
In \cite{PascoeAlgDim}, it was shown that the rank of $P$ gives the dimension of the algebra generated by the $X_i$ and, in fact, that the columns of $P$ span that algebra.
We establish some basic facts about the elementary Pick matrix which culminate in item (3)
which gives implication  $(1 \Rightarrow 2)$ in Theorem \ref{mainresult}.
\begin{proposition}\label{elempickprop}
	Let $(X_1,\ldots, X_d) \in M_n(\mathbb{C})^d$ such that $\hat{\rho}(X_1,\ldots , X_d)<1$ and $P$ be the corresponding
	elementary Pick matrix. The following are true:
	\begin{enumerate}
		\item $P \geq 0,$
		\item $P - \sum \tensor{I}{X_i} P \tensor{I}{X_i^*} = 1^\psi \geq 0,$
		\item Let $L = Q(P).$ $L >0$ and
		$$L - \sum X_iLX_i^* = I > 0.$$ 
	\end{enumerate}
\end{proposition}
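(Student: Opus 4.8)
The plan is to exploit the Neumann series for $(1-W)^{-1}$, where $W = \sum_i \overline{X_i}\otimes X_i$, together with the algebraic identities for $\psi$ and $Q$ from Propositions \ref{psiproperties} and \ref{Qproperties}. Since $\hat{\rho}(X_1,\ldots,X_d)^2 = \rho(W) < 1$, the matrix $1-W$ is invertible and $(1-W)^{-1} = \sum_{k=0}^\infty W^k$, the series converging in operator norm; as $\psi$ is linear (hence bounded, being a map of a finite-dimensional space), $P = [(1-W)^{-1}]^\psi = \sum_{k\geq 0}(W^k)^\psi$. Exactly as in the proof of Theorem \ref{outerspectralradiusformula}, $(W^k)^\psi = \sum_{i_1,\ldots,i_k}(\vecc(X_{i_1}\cdots X_{i_k}))(\vecc(X_{i_1}\cdots X_{i_k}))^*$ is a finite sum of positive semidefinite rank-one matrices (for $k=0$ this reads $1^\psi = (\vecc I)(\vecc I)^*$, using the ``twisted tensor to outer product'' identity with $A=B=I$). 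Summing over $k$ yields item (1), $P \geq 0$, and simultaneously records that $1^\psi \geq 0$.

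For item (2), I would push the trivial resolvent identity $(1-W)(1-W)^{-1} = 1$ through $\psi$. The mechanism is $4$-modularity: setting $A = C = I$ and $B = D = X_i$ in the $4$-modularity relation of Proposition \ref{psiproperties} gives $(I\otimes X_i)\,E^\psi\,(I\otimes X_i^*) = [(\overline{X_i}\otimes X_i)\,E]^\psi$ for every $E \in M_{n^2}(\mathbb{C})$. Taking $E = (1-W)^{-1}$ and summing over $i$,
$$\sum_i (I\otimes X_i)\,P\,(I\otimes X_i^*) = \Bigl[\,\textstyle\sum_i(\overline{X_i}\otimes X_i)\,(1-W)^{-1}\Bigr]^\psi = \bigl[W(1-W)^{-1}\bigr]^\psi,$$
whence $P - \sum_i (I\otimes X_i)\,P\,(I\otimes X_i^*) = [(1-W)(1-W)^{-1}]^\psi = 1^\psi \geq 0$, which is item (2).

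For item (3), apply the partial trace $Q$ to the identity of item (2). Modularity of $Q$ (Proposition \ref{Qproperties}, with $A = X_i$) gives $Q((I\otimes X_i)P(I\otimes X_i^*)) = X_i Q(P) X_i^* = X_i L X_i^*$, and the $\psi$-identity identity gives $Q(1^\psi) = I$; hence $L - \sum_i X_i L X_i^* = I$. Positivity of $Q$ gives $L = Q(P) \geq 0$, and strict positivity follows from the Lyapunov equation itself: if $Lv = 0$ with $v \neq 0$, then $v^*v = v^*Lv - \sum_i(X_i^*v)^*L(X_i^*v) \leq 0$, a contradiction. Thus $L > 0$ and $L - \sum_i X_iLX_i^* = I > 0$.

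I expect no serious obstacle: once the identities of Propositions \ref{psiproperties} and \ref{Qproperties} are in hand the argument is nearly formal. The only step demanding attention is the correct instantiation of $4$-modularity — matching conjugates and adjoints so that $I\otimes X_i$ sits on the left and $I\otimes X_i^*$ on the right — after which everything collapses to $(1-W)(1-W)^{-1}=1$ and a one-line kernel argument for the strict positivity of $L$.
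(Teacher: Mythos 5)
Your proof is correct and follows essentially the same route as the paper: Neumann series expansion of $(1-W)^{-1}$ plus positivity of each $(W^k)^\psi$ for item (1), the $4$-modularity instantiation collapsing item (2) to $[(1-W)(1-W)^{-1}]^\psi = 1^\psi$, and the modularity and positivity of $Q$ for item (3). The only cosmetic differences are that the paper deduces $(W^k)^\psi \geq 0$ from the Schur Product Property rather than the explicit rank-one decomposition, and obtains $L>0$ from the one-line observation $L \geq L - \sum X_iLX_i^* = I$ rather than your kernel argument; both variants are fine.
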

\begin{proof}
{\bf(1)} Let
	$$S= \left(\sum \tensor{\overline{X_i}}{X_i}\right)^\psi.$$
We first note that
	$$S = \sum (\vecc X_i)(\vecc X_i)^*$$
since $\psi$ has the property that it takes twisted tensor products to outer products as was established in Proposition \ref{psiproperties}.
Namely, $S \geq 0$ since it is a sum of positive semidefinite rank one matrices.
Note that, since $\hat{\rho}(X_1,\ldots , X_d)<1$ implies that the spectral radius of $S^\psi$ is less than one, we can expand
$P$ using the geometric series:
	\begin{align*}
		P &=\left[\left(1- \sum \tensor{\overline{X_i}}{X_i}\right)^{-1}\right]^{\psi} \\
		& = \left[\left(1- S^\psi\right)^{-1}\right]^{\psi}\\
		&= \left[\sum^\infty_{k=0} (S^\psi)^k\right]^\psi. \\
		& = \sum^\infty_{k=0} [(S^\psi)^k]^\psi
	\end{align*}
	We note that each term $[(S^\psi)^k]^\psi \geq 0$ by the Schur Product Property in Proposition \ref{psiproperties},
so we get that
	$$P = \sum^\infty_{k=0} [(S^\psi)^k]^\psi \geq 0$$
and we are done.

{\bf(2)} Now we want to show that
$$P - \sum \tensor{I}{X_i} P \tensor{I}{X_i^*} \geq 0.$$
First we note that
\begin{align*}
	P - \sum \tensor{I}{X_i}P\tensor{I}{X_i^*}&
	= P -
	\sum \tensor{I}{X_i}\left[\left(1- \sum \tensor{\overline{X_i}}{X_i}\right)^{-1}\right]^{\psi}\tensor{I}{X_i^{*}}
	\\
	& = P - \left[\sum \tensor{\overline{X_i}}{X_i}\left(1- \sum \tensor{\overline{X_i}}{X_i}\right)^{-1}\right]^{\psi}
\end{align*}
by the 4-modularity of $\psi$ from Proposition \ref{psiproperties}.
Now we see that
\begin{align*}
	P - \sum \tensor{I}{X_i}P\tensor{I}{X_i^*}
	& = P - \left[\sum \tensor{\overline{X_i}}{X_i}\left(1- \sum \tensor{\overline{X_i}}{X_i}\right)^{-1}\right]^{\psi} \\
	& = \left[\left(1- \sum \tensor{\overline{X_i}}{X_i}\right)^{-1}\right]^{\psi} - \left[\sum \tensor{\overline{X_i}}{X_i}\left(1- \sum \tensor{\overline{X_i}}{X_i}\right)^{-1}\right]^{\psi} \\
	& =
	\left[\left(1-\sum\tensor{\overline{X_i}}{X_i}\right)
	\left(1- \sum \tensor{\overline{X_i}}{X_i}\right)^{-1}\right]^{\psi}
	\\
	& = [1]^\psi
	\\
	& = (\text{vec }I)(\text{vec }{I})^*
	\\
	& \geq 0
\end{align*}
so we are done.

{\bf(3)} Let $L = Q(P).$ We want to show that $L >0$ and
		$$L - \sum X_iLX_i^* > 0.$$ 
Firstly, we note that
$$L - \sum X_iLX_i^*  = Q\left(P - \sum \tensor{I}{X_i}P\tensor{I}{X_i^*}\right),$$
via the modularity of $Q$ in Proposition \ref{Qproperties}.
Now,
\begin{align*}
L - \sum X_iLX_i^* & = Q\left(P - \sum \tensor{I}{X_i}P\tensor{I}{X_i^*}\right), \\
		& = Q([1]^\psi), \\
		& = I, \\
		& > 0.
\end{align*}
Since $L = Q(P)$ and $P \geq 0,$ we see that $L \geq 0$ by the positivity of $Q$ in Proposition \ref{Qproperties}.
Now, $L \geq L - \sum X_iLX_i^* > 0,$ so we are done.
\end{proof}

\section{The outer spectrum and dynamics of completely positive maps}
We now begin an endeavor to understand the spectral theory of
$$T = \sum \flattensor{\overline{X_i}}{X_i}.$$
The spectrum of $T$ can the though of as the \dfn{outer spectrum} of the tuple $(X_1, \ldots, X_d).$
Our analysis of the outer spectrum gives insight into the dynamics of completely positive maps as was described in the introduction.

\subsection{Maximum eigenvalue is nonnegative} \label{maxsect}
First, we can quickly use our formula for the Lyapunov matrix from Proposition \ref{elempickprop} to show that the spectral radius of $T$ is equal to the maximum eigenvalue of
$T.$
\begin{proposition}
     Let $X=(X_1,\ldots, X_d) \in M_n(\mathbb{C})^d.$
    Let $T = \sum \flattensor{\overline{X_i}}{X_i}.$
    There is a real nonnegative eigenvalue $\lambda$ of $T$ such that $\lambda = \rho(T).$
\end{proposition}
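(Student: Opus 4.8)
The plan is to reduce to the normalized case $\hat{\rho}(X_1,\ldots,X_d)=1$, equivalently $\rho(T)=1$, and then to manufacture a nonzero positive semidefinite fixed point of the completely positive map $\phi_T(H)=\sum X_iHX_i^*$ as a limit of rescaled Lyapunov matrices furnished by Proposition \ref{elempickprop}; vectorizing this fixed point yields the required eigenvector of $T$.

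First I would dispose of the case $\rho(T)=0$, where $0$ is itself a real nonnegative eigenvalue of $T$ and there is nothing to prove. If $\rho(T)>0$, then by homogeneity one has $\hat{\rho}(cX_1,\ldots,cX_d)=c\,\hat{\rho}(X_1,\ldots,X_d)$ for $c>0$, and since $\sum \flattensor{\overline{cX_i}}{cX_i}=c^2T$, replacing each $X_i$ by $X_i/\sqrt{\rho(T)}$ rescales $T$ to $T/\rho(T)$ and $\hat{\rho}$ to $1$; an eigenvector of the rescaled matrix at the eigenvalue $1$ is an eigenvector of the original $T$ at the eigenvalue $\rho(T)$. So I may and do assume $\hat{\rho}(X_1,\ldots,X_d)=1$, hence $\rho(T)=1$.

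Next, for each real $t>1$ the tuple $(X_1/t,\ldots,X_d/t)$ has outer spectral radius $1/t<1$, so Proposition \ref{elempickprop}(3) supplies a positive definite matrix $L_t$ with $L_t-\tfrac1{t^2}\sum X_iL_tX_i^*=I$. The heart of the argument is the claim that $\|L_t\|\to\infty$ as $t\to 1^+$. Suppose not: along some sequence $t_j\downarrow 1$ the matrices $L_{t_j}$ lie in a bounded set of positive semidefinite matrices, hence, after passing to a subsequence, converge to some $L_\infty\geq 0$. Passing to the limit in the Lyapunov identity gives $L_\infty-\sum X_iL_\infty X_i^*=I$, so $L_\infty=I+\sum X_iL_\infty X_i^*\geq I$ is in fact positive definite; but then $L_\infty$ witnesses condition $(2)$ of Theorem \ref{mainresult}, forcing $\hat{\rho}(X_1,\ldots,X_d)<1$, a contradiction. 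I expect this divergence claim, with its appeal to Theorem \ref{mainresult}, to be the main obstacle; the remainder is routine compactness.

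Finally, set $\widehat{L}_t=L_t/\|L_t\|$, a positive semidefinite matrix of norm $1$; dividing the Lyapunov identity by $\|L_t\|$ yields $\widehat{L}_t-\tfrac1{t^2}\sum X_i\widehat{L}_tX_i^*=\tfrac1{\|L_t\|}I$. Choosing $t_j\downarrow 1$ along which $\widehat{L}_{t_j}$ converges (compactness of the unit sphere of positive semidefinite matrices), the right-hand side tends to $0$ since $\|L_{t_j}\|\to\infty$, and the limit $\widehat{L}\geq 0$ satisfies $\|\widehat{L}\|=1$ and $\sum X_i\widehat{L}X_i^*=\widehat{L}$. Using the identity $\flattensor{G}{H}\,\vecc K=\vecc(HKG^T)$ with $G=\overline{X_i}$, $H=X_i$ gives $\flattensor{\overline{X_i}}{X_i}\,\vecc K=\vecc(X_iKX_i^*)$ for every $K$, so $T\,\vecc(\widehat{L})=\vecc\bigl(\sum X_i\widehat{L}X_i^*\bigr)=\vecc(\widehat{L})$; thus the nonzero vector $\vecc(\widehat{L})$ is an eigenvector of $T$ for the real nonnegative eigenvalue $1=\rho(T)$, and undoing the initial rescaling gives the statement for the original $T$. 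One could alternatively invoke the Krein-Rutman form of the Perron-Frobenius theorem for the positive map $\phi_T$ acting on the cone of positive semidefinite matrices, but the argument above keeps everything constructive, starting from the elementary Pick matrix.
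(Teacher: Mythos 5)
Your proof is correct, and while it runs on the same fuel as the paper's --- the Lyapunov matrices $L_t$ produced by Proposition \ref{elempickprop} for rescaled tuples, together with the implication $(2)\Rightarrow(1)$ of Theorem \ref{mainresult} --- the mechanism is genuinely different. The paper argues by contradiction: assuming $1$ is not an eigenvalue of $T$, the resolvent $(1-t^2T)^{-1}$, and hence $L_t=Q(P_t)$, extends continuously to the critical parameter $t=1$, producing a Lyapunov matrix there and contradicting $\hat{\rho}=1$; no eigenvector is ever exhibited. You instead show that the $L_t$ must blow up as the scaling approaches criticality (the same contradiction, deployed to rule out boundedness), then normalize and pass to a subsequential limit to obtain a nonzero positive semidefinite $\widehat{L}$ with $\sum X_i\widehat{L}X_i^*=\widehat{L}$, so that $\vecc(\widehat{L})$ is an eigenvector of $T$ for the eigenvalue $\rho(T)$. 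This Krein--Rutman-style argument is slightly longer but buys strictly more: it produces a positive semidefinite fixed point of the completely positive map $\phi_T$ (an invariant ``state'' up to normalization), information the paper only recovers later, in Theorem \ref{maximalreal} and the quantum Perron--Frobenius discussion, by analyzing $\hat{T}$. All the individual steps check out: the reduction to $\rho(T)=1$, the compactness extractions, and the identity $(\flattensor{\overline{X_i}}{X_i})\vecc K=\vecc(X_iKX_i^*)$ are consistent with the paper's vectorization convention.
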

\begin{proof}
    Without loss of generality, assume $\rho(T)=1.$
    Just suppose $1$ is not an eigenvalue of $T.$
    Define
        $$P_t = \left[\left(1- \sum t^2\tensor{\overline{X_i}}{X_i}\right)^{-1}\right]^{\psi}.$$
    Clearly, $P_t$ is well-defined, continuous, and finite for $t \in [0,1].$
    Let $L_t = Q(P_t).$
    For each $0\leq t < 1,$ we see that
        $L_t - t^2\sum X_iL_tX_i^* = I,$
    by Proposition \ref{elempickprop} because $\rho(tX)<1.$
    So, by continuity,
        $$L_1 - \sum X_iL_1X_i^*=I>0.$$
    Therefore, by Theorem \ref{mainresult}, we see that $\hat{\rho}(X)=\rho(T) < 1,$ which is a contradiction.
\end{proof}

To prove that the maximual spectrum actually contains a nonnegative element is somewhat more subtle. We now prove Theorem \ref{maximalreal}.
\begin{proof}
If $T$ is nilpotent, there is nothing to prove. Suppose $\rho(T) =1.$
Recall $\Lambda$ is the closure of the subgroup of the torus $\mathbb{T}^{m_T}$ generated by the point $\tau=(\lambda_1, \ldots, \lambda_{m_T})$
where the $\lambda_i$ are the elements of the maximal spectrum, where $m_T$ is the cardinality of the maximal  spectrum of $T.$
Recall there was a bijective map $T_\lambda$ from $\Lambda$ to the limit points of the sequence $\frac{T^n}{\|T^n\|}.$
Note that 
    $T^\psi_\lambda \geq 0$
by the Schur product property for the $\psi$ involution.
Moreover, $T_\lambda^\psi \neq 0.$
Now, recall,
    $$\hat{T} = \int_{\Lambda} T_\lambda \dd \lambda =\lim_{N\rightarrow \infty} \frac{1}{N}\sum^N_{n=1} \frac{T^n}{\|T^n\|}.$$
So, $\hat{T}^\psi\geq 0$ and $\hat{T}^\psi \neq 0.$
By considering the Jordan decomposition of $T$, we see that any generalized eigenvector $v$ of $T$
corresponding to an eigenvalue which is not $1$ must be sent the zero vector by $\hat{T}$ by looking that the formula for $\hat{T}.$
Moreover, if $1$ is an eigenvalue, but does not have maximal degeneracy index, $\hat{T}$ would also send any generalized eigenvector $v$ with eigenvalue $1$ to $0.$
Therefore, since $\hat{T}\neq 0,$ $1$ must be an eigenvalue of $T$ with maximal degeneracy index.
\end{proof}

\subsection{Structure of the $T_\lambda$}
We now prove Theorem \ref{dyntheorem}.
\begin{proposition}\label{dynprop}
    Let $T = \sum \flattensor{\overline{X_i}}{X_i}.$
    Additionally, assume that $\hat{\rho}(X_1,\ldots , X_d)=1.$
    \begin{enumerate}
    \item
    There is an $a \in \mathbb{N}$ such that  for every $\lambda \in \Lambda$
    there are $A_{\lambda,1}\ldots A_{\lambda,a}\in M_n(\mathbb{C})$ such that
    $T_\lambda = \sum^a_{j=1} \flattensor{\overline{A_{\lambda,j}}}{A_{\lambda,j}}.$
    \item
     There are $B_{k}\in M_n(\mathbb{C})$ whose span is a nonzero ideal in the algebra generated by $X_1,\ldots, X_d$ such that
    $\hat{T} = \sum_k \flattensor{\overline{B_{k}}}{B_{k}}.$ Moreover, the $A_{\lambda,j}$ are contained in the span of the $B_{k}$'s.
    \end{enumerate}
\end{proposition}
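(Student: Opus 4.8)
The plan is to reduce everything to one elementary dictionary: a matrix $M \in M_{n^2}(\mathbb{C})$ is of the form $\sum_j \overline{A_j}\otimes A_j$ for some $A_j \in M_n(\mathbb{C})$ if and only if $M^\psi \geq 0$. Indeed, by Proposition \ref{psiproperties} we have $(\overline{A}\otimes A)^\psi = (\vecc A)(\vecc A)^*$, so if $M^\psi \geq 0$ we diagonalize $M^\psi = \sum_j v_j v_j^*$, write $v_j = \vecc A_j$, and apply $\psi$ once more; conversely the Schur product property gives $(\sum_j \overline{A_j}\otimes A_j)^\psi = \sum_j (\vecc A_j)(\vecc A_j)^* \geq 0$. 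In this correspondence $\vecc$ carries $\operatorname{span}\{A_j\}$ onto $\operatorname{ran}(M^\psi)$, and one may take exactly $\operatorname{rank}(M^\psi)$ terms. From the proof of Theorem \ref{maximalreal} we already know $T_\lambda^\psi \geq 0$ for every $\lambda \in \Lambda$, and $\hat{T}^\psi \geq 0$ with $\hat{T}^\psi \neq 0$; so decompositions $T_\lambda = \sum_j \overline{A_{\lambda,j}}\otimes A_{\lambda,j}$ and $\hat{T} = \sum_k \overline{B_k}\otimes B_k$ exist for free, with $\operatorname{span}\{B_k\} = \vecc^{-1}(\operatorname{ran}(\hat{T}^\psi)) =: \mathcal{I}$. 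What remains is to show (i) $\mathcal{I}$ is a nonzero two-sided ideal in the algebra $\mathcal{A}$ generated by $X_1, \ldots, X_d$, and (ii) $\operatorname{ran}(T_\lambda^\psi) \subseteq \operatorname{ran}(\hat{T}^\psi)$ for every $\lambda$, so that the single number $a := \operatorname{rank}(\hat{T}^\psi)$ works in part (1) and all the $A_{\lambda,j}$ land in $\mathcal{I}$.

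For (i) I would first note that $\hat{T}$ is a fixed point of $T$: writing $\hat{T} = \int_\Lambda T_\lambda \, \dd\lambda$ against normalized Haar measure and using $T T_\lambda = T_\lambda T = T_{\tau\lambda}$ together with translation invariance of Haar measure, $T\hat{T} = \int_\Lambda T_{\tau\lambda}\,\dd\lambda = \hat{T}$, and likewise $\hat{T}T = \hat{T}$. Next, a direct computation from the 4-modularity of $\psi$ (Proposition \ref{psiproperties}, in the spirit of the identities already used for $Q$) gives, for every $M \in M_{n^2}(\mathbb{C})$,
\[
(TM)^\psi = \sum_i (I\otimes X_i)\, M^\psi\, (I\otimes X_i)^*, \qquad (MT)^\psi = \sum_i (X_i^T\otimes I)\, M^\psi\, (X_i^T\otimes I)^*.
\]
Applying these with $M = \hat{T}$ and using $T\hat{T} = \hat{T}T = \hat{T}$ shows that $\hat{T}^\psi$ equals the sum of the positive semidefinite matrices $(I\otimes X_i)\hat{T}^\psi(I\otimes X_i)^*$, and also the sum of the positive semidefinite matrices $(X_i^T\otimes I)\hat{T}^\psi(X_i^T\otimes I)^*$. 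Hence each summand is $\leq \hat{T}^\psi$, so (as $0\le E\le F$ forces $\operatorname{ran}E\subseteq\operatorname{ran}F$) its range lies in $\operatorname{ran}(\hat{T}^\psi)$; but $\operatorname{ran}((I\otimes X_i)\hat{T}^\psi(I\otimes X_i)^*) = (I\otimes X_i)\operatorname{ran}(\hat{T}^\psi)$ (and similarly with $X_i^T\otimes I$), using $\operatorname{ran}(BCB^*) = B\operatorname{ran}(C)$ for $C \geq 0$. Since $\vecc$ intertwines left multiplication by $X_i$ with $I\otimes X_i$ and right multiplication by $X_i$ with $X_i^T\otimes I$, this says precisely that $X_i \mathcal{I} \subseteq \mathcal{I}$ and $\mathcal{I} X_i \subseteq \mathcal{I}$, so $\mathcal{I}$ is a two-sided ideal. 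It is nonzero because $\hat{T}^\psi \neq 0$, and it lies inside $\mathcal{A}$ because $\operatorname{ran}(\hat{T}^\psi)$ is a limit of averages of $\operatorname{ran}((T^n)^\psi) = \vecc(\operatorname{span}\{X_{i_1}\cdots X_{i_n}\}) \subseteq \vecc(\mathcal{A})$, as in the discussion of the elementary Pick matrix and \cite{PascoeAlgDim}. Choosing $\{B_k\}$ to be a basis of $\mathcal{I}$ then gives part (2).

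For (ii), apply the linear map $\psi$ inside the integral: $\hat{T}^\psi = \int_\Lambda T_\lambda^\psi\,\dd\lambda$, with each $T_\lambda^\psi \geq 0$ and $\lambda \mapsto T_\lambda^\psi$ continuous (from the Jordan-form description of $T_\lambda$). If $v \in \ker(\hat{T}^\psi)$ then $\int_\Lambda v^* T_\lambda^\psi v \,\dd\lambda = 0$ with a nonnegative continuous integrand against a measure of full support, forcing $v^* T_\lambda^\psi v = 0$, hence $T_\lambda^\psi v = 0$, for every $\lambda$. Thus $\ker(\hat{T}^\psi) \subseteq \ker(T_\lambda^\psi)$, i.e. $\operatorname{ran}(T_\lambda^\psi) \subseteq \operatorname{ran}(\hat{T}^\psi)$. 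Consequently $\operatorname{rank}(T_\lambda^\psi) \leq a := \operatorname{rank}(\hat{T}^\psi)$, so each $T_\lambda$ can be written with exactly $a$ summands (padding with zero matrices), and $\operatorname{span}\{A_{\lambda,j}\} = \vecc^{-1}(\operatorname{ran}(T_\lambda^\psi)) \subseteq \vecc^{-1}(\operatorname{ran}(\hat{T}^\psi)) = \operatorname{span}\{B_k\} = \mathcal{I}$, which finishes part (1) and the ``moreover'' clause of part (2).

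I expect the crux to be the two $\psi$-conjugation identities together with the fixed-point relations $T\hat{T} = \hat{T}T = \hat{T}$: once these are established, the rest is the soft observation that when a positive semidefinite matrix is written as a sum of positive semidefinite pieces each piece's range is trapped inside the whole, combined with $\operatorname{ran}(BCB^*) = B\operatorname{ran}(C)$. The continuity-and-full-support argument in (ii) and the verification that $\mathcal{I} \subseteq \mathcal{A}$ are routine but should be spelled out.
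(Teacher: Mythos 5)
Your proposal is correct and follows essentially the same route as the paper: positivity of $T_\lambda^\psi$ yields the Kraus form, the identity $\hat{T}^\psi=\int_\Lambda T_\lambda^\psi\,\dd\lambda$ traps each $\operatorname{ran}(T_\lambda^\psi)$ inside $\operatorname{ran}(\hat{T}^\psi)$, and applying $\psi$ to the fixed-point relation $T\hat{T}=\hat{T}T=\hat{T}$ shows each $X_iB_k$ and $B_kX_i$ lies in the span of the $B_k$'s. You merely make explicit some steps the paper leaves implicit (the range-trapping lemma for sums of positive semidefinite matrices, the full-support argument for Haar measure, and the uniform bound $a=\operatorname{rank}(\hat{T}^\psi)$), which is fine but not a different argument.
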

\begin{proof}
    Note $T_\lambda^\psi \geq 0.$ So $$T_\lambda^{\psi} = \sum \vecc{A_{\lambda,j}}(\vecc{A_{\lambda,j}})^*.$$
    So, since $\psi$ interchanges Kronecker products and outer products,
        $$T_\lambda = \sum^a_{j=1} \flattensor{\overline{A_{\lambda,j}}}{A_{\lambda,j}}.$$
    Moreover, since
        $$\hat{T}^\psi = \int_{\Lambda} T_\lambda^{\psi} \dd \lambda,$$
    we see that $\hat{T}$ has the appropriate form and $\vecc{A_{\lambda,j}}$ is in its range for each choice of $\lambda$ and $j.$ (That is, something is in the kernel of $\hat{T}^\psi$ if and only if it is in the kernel of every $T_\lambda^\psi.$)
    Now, it remains to be seen that the $B_k$ span an ideal. 
    Recall that
        $\hat{T}T = T\hat{T} = \hat{T}.$
    That is,
       $$\sum_i \sum_k \flattensor{\overline{X_iB_{k}}}{X_iB_{k}} = \sum_i \sum_k \flattensor{\overline{B_{k}X_i}}{B_{k}X_i}=\sum_k \flattensor{\overline{B_{k}}}{B_{k}}.$$
    By taking the $\psi$ map of the relation, we see that 
         $$\sum_i \sum_k \vecc X_iB_{k}(\vecc X_iB_{k})^* = \sum_i \sum_k \vecc B_{k}X_i(\vecc B_{k}X_i)^*=\sum_k \vecc B_{k}(\vecc B_{k})^*,$$
    which implies that each $X_iB_{k}$ and $B_{k}X_i$ is in the span of the $B_k$'s and therefore they span an ideal in the algebra generated by the $X_i$'s.
\end{proof}

We now need the following lemma.
\begin{lemma}\label{poslem}
Let $U= \sum\flattensor{\overline{A_{k}}}{A_{k}},$ $V = \sum \flattensor{\overline{B_{k}}}{B_{k}}.$
If the $B_k$'s are contained in the span of the $A_k$'s then there is an $\varepsilon$ such that $\phi_U(H) \geq \varepsilon\phi_V(H).$ 
\end{lemma}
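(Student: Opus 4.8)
The plan is to exploit the structure $\phi_U(H) = \sum_k A_k H A_k^*$ and $\phi_V(H) = \sum_k B_k H B_k^*$, together with the hypothesis that each $B_k$ lies in the span of the $A_k$'s, to express $\phi_V$ as a "compression" of $\phi_U$ and then compare. Write $B_k = \sum_\ell c_{k\ell} A_\ell$ for scalars $c_{k\ell}$, i.e.\ $\vecc B_k = C \,[\,\vecc A_1 \;\cdots\;]$ in the obvious matricial form, where $C = (c_{k\ell})$. First I would observe that for $H \geq 0$ one has $\phi_V(H) = \sum_k B_k H B_k^* = \sum_{\ell, m} \big(\sum_k c_{k\ell}\overline{c_{km}}\big) A_\ell H A_m^*$; equivalently, writing $\Phi(H)$ for the block matrix whose $(\ell,m)$ block is $A_\ell H A_m^*$ (a positive semidefinite matrix in $M_n(\mathbb{C})\otimes M_p(\mathbb{C})$ when $H \geq 0$, since it equals $(\text{col}_\ell A_\ell)\, (I\otimes H)\,(\text{col}_\ell A_\ell)^*$ up to rearrangement), both $\phi_U(H)$ and $\phi_V(H)$ arise by compressing $\Phi(H)$ against fixed matrices: $\phi_U(H) = R^*\Phi(H) R$ with $R$ built from the identity, and $\phi_V(H) = R^*(I\otimes C^*)\,\Phi(H)\,(I\otimes C) R$, or more simply both are of the form $M^*\Phi(H)M$ for suitable fixed $M$.

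The key step is then the elementary fact that if $G \geq 0$ is a fixed positive semidefinite matrix and $M_1, M_2$ are fixed matrices of compatible size, then there is $\varepsilon > 0$ with $M_1^* G M_1 \geq \varepsilon\, M_2^* G M_2$ uniformly — no, this is false without a relationship between $M_1$ and $M_2$; the correct formulation is that since $\ran(M_2) \subseteq$ (after the span hypothesis) a space on which $M_1^*(\cdot)M_1$ dominates, one gets $\varepsilon$ depending only on the $A_k$'s and the coefficients $c_{k\ell}$. Concretely, I would argue: the map $H \mapsto \phi_U(H)$ is a fixed completely positive map, and the span condition guarantees $\phi_V(H) \leq \|C\|^2_{\text{op}}\cdot(\text{something})$; to get a clean $\varepsilon$ I would instead use that both $\phi_U$ and $\phi_V$ are determined by the operator systems, and $\ker \phi_U \subseteq \ker \phi_V$ when restricted to positives — more precisely, $\phi_U(H) = 0$ with $H \geq 0$ forces $A_k H A_k^* = 0$ for all $k$, hence $A_k H^{1/2} = 0$, hence $B_k H^{1/2} = 0$ (as $B_k$ is a combination of the $A_k$), hence $\phi_V(H) = 0$. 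Then a compactness argument on the unit sphere of the cone of positive semidefinite $H$ (which is compact) yields the uniform $\varepsilon$: the function $H \mapsto \phi_U(H) - \varepsilon \phi_V(H)$ is positive semidefinite for $\varepsilon$ small, by continuity and the kernel containment.

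The main obstacle I anticipate is making the compactness/uniformity argument fully rigorous: one must show that $\inf\{\lambda_{\min}(\phi_U(H)) : H \geq 0, \|H\| = 1, \phi_U(H)\text{ restricted to its support}\}$ is controlled, since $\phi_U(H)$ may be singular while $\phi_V(H)$ is nonzero on the orthogonal complement — but this cannot happen precisely by the kernel-containment computation above ($A_k H^{1/2}=0 \Rightarrow B_k H^{1/2}=0$), which shows $\ran \phi_V(H) \subseteq \ran \phi_U(H)$ for every $H \geq 0$. Given that range inclusion, on the compact set $\{H \geq 0 : \Tr H = 1\}$ the ratio is bounded, and taking $\varepsilon$ to be the reciprocal of the supremum of $\|\phi_U(H)^{\dagger/2}\phi_V(H)\phi_U(H)^{\dagger/2}\|$ (pseudoinverse on the common range) finishes it; I would present this via the cleaner route of the explicit bound $\phi_V(H) \le \big(\sum_k \|c_{k\cdot}\|^2\big)\sum_\ell A_\ell H A_\ell^*$ only when the $A_\ell$'s are linearly independent (pass to a basis of their span first, which is harmless), giving $\phi_V \le \kappa \,\phi_U$ and hence the lemma with $\varepsilon = \kappa^{-1}$.
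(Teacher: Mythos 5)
Your final argument --- the explicit Cauchy--Schwarz bound --- is correct, and it takes a genuinely different, more elementary route than the paper. Writing $B_k=\sum_\ell c_{k\ell}A_\ell$, the operator Cauchy--Schwarz inequality gives, for $H\ge 0$,
$B_kHB_k^*=\bigl(\sum_\ell c_{k\ell}A_\ell H^{1/2}\bigr)\bigl(\sum_\ell c_{k\ell}A_\ell H^{1/2}\bigr)^*\le\bigl(\sum_\ell|c_{k\ell}|^2\bigr)\sum_\ell A_\ell HA_\ell^*$,
and summing over $k$ yields $\phi_V(H)\le\kappa\,\phi_U(H)$ with the explicit constant $\kappa=\sum_{k,\ell}|c_{k\ell}|^2$, hence the lemma with $\varepsilon=\kappa^{-1}$. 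The paper instead works on the other side of the $\psi$ involution: $U^\psi=\sum(\vecc A_k)(\vecc A_k)^*$ and $V^\psi=\sum(\vecc B_k)(\vecc B_k)^*$ are positive semidefinite with $\ran{V^\psi}\subseteq\ran{U^\psi}$ by the span hypothesis, so $U^\psi\ge\varepsilon V^\psi$ for some $\varepsilon>0$; then $(U-\varepsilon V)^\psi\ge0$ forces $U-\varepsilon V=\sum\flattensor{\overline{C_k}}{C_k}$, so $\phi_{U-\varepsilon V}$ is completely positive. Your route buys an explicit, computable $\varepsilon$ and avoids the $\psi$ machinery; the paper's buys a two-line argument consistent with the toolkit used throughout.

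Two corrections before you commit this to paper. First, the compactness detour has a genuine gap and should be cut: finiteness of $\bigl\|\phi_U(H)^{\dagger/2}\phi_V(H)\phi_U(H)^{\dagger/2}\bigr\|$ at each point of the compact set $\{H\ge0:\Tr H=1\}$ does not by itself give a uniform bound, because the pseudoinverse (and hence this ratio) fails to be upper semicontinuous where the rank of $\phi_U(H)$ changes; the kernel-containment computation gives an $\varepsilon_H$ for each $H$ but no uniformity. Second, linear independence of the $A_\ell$ is not needed for the Cauchy--Schwarz bound (any choice of coefficients $c_{k\ell}$ works), and ``passing to a basis of the span'' is only harmless if that basis is a subset of the original $A_\ell$'s, so that the discarded terms of $\phi_U$ are positive semidefinite and only strengthen the inequality; replacing the $A_\ell$'s by an arbitrary basis changes $\phi_U$ itself. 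Finally, state the conclusion as holding for $H\ge0$ (equivalently, that $\phi_U-\varepsilon\phi_V$ is a positive map), which is what both your argument and the paper's actually prove.
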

\begin{proof}
    Note there is an $\varepsilon$ such that $$U^\psi \geq \varepsilon V^\psi.$$ So $(U-\varepsilon V)^\psi$ is positive semi-definite. Therefore, $U-\varepsilon V= \flattensor{\overline{C_{k}}}{C_{k}}$
    and so $\phi_{U-\varepsilon V}$ is a completely positive map.
\end{proof}
We now prove Theorem \ref{fullmaximal}.
\begin{proposition}\label{fullmaximalprop}
     Let $T = \sum \flattensor{\overline{X_i}}{X_i}.$
    Additionally, assume that $\hat{\rho}(X_1,\ldots , X_d)=1.$
    If $X_1, \ldots, X_d$ generate the full algebra of $n$ by $n$ matrices, then $\hat{T}$ has rank $1.$
    Moreover, in such a case, $\hat{T}=\sum_k \flattensor{\overline{B_{k}}}{B_{k}}$ where the $B_k$'s span $M_n(\mathbb{C}).$    
    That is, the ideal described in Theorem \ref{dyntheorem} is equal to the whole algebra, as the only nonzero ideal in $M_n(\mathbb{C})$ is  $M_n(\mathbb{C})$  itself.
\end{proposition}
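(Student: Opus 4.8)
The plan is to combine the structural description of $\hat T$ from Theorem~\ref{dyntheorem} with a Perron--Frobenius type squeezing argument, using Lemma~\ref{poslem} to produce a strictly positive fixed point. By Proposition~\ref{dynprop} we may write $\hat T = \sum_k \flattensor{\overline{B_k}}{B_k}$, where the $B_k$ span a nonzero two-sided ideal of the algebra $\mathcal{A}$ generated by $X_1, \ldots, X_d$. Since by hypothesis $\mathcal{A} = M_n(\mathbb{C})$ is simple, its only nonzero two-sided ideal is $M_n(\mathbb{C})$ itself, so $\spn{B_k} = M_n(\mathbb{C})$; this already gives the ``moreover'' clause, and it remains to prove that $\hat T$ has rank $1$ as a linear map.

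Next I would produce a positive definite fixed point of the completely positive map $\phi(H) = \sum X_i H X_i^*$. Applying Lemma~\ref{poslem} with $U = \hat T$ and with $V$ the completely positive map whose Kraus operators are the matrix units $E_{ij}$, $1 \leq i,j \leq n$ (these span $M_n(\mathbb{C}) = \spn{B_k}$, so the hypothesis of the lemma is satisfied), we obtain $\varepsilon > 0$ with $\phi_{\hat T}(H) \geq \varepsilon\, \phi_V(H)$ for all $H \geq 0$. A direct computation gives $\phi_V(H) = \sum_{i,j} E_{ij} H E_{ij}^* = \Tr(H)\, I$, so $\phi_{\hat T}(H) \geq \varepsilon \Tr(H)\, I$ for every $H \geq 0$; in particular $D := \phi_{\hat T}(I) = \sum_k B_k B_k^* \geq \varepsilon n I > 0$. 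Moreover $\hat T T = T \hat T = \hat T$ (as recalled in the proof of Proposition~\ref{dynprop}), so, using $\phi_{T_1 T_2} = \phi_{T_1} \circ \phi_{T_2}$, we get $\phi \circ \phi_{\hat T} = \phi_{\hat T}$; evaluating at $I$ yields $\phi(D) = D$. Thus $D$ is a positive definite fixed point of $\phi$.

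The heart of the argument is the uniqueness of the fixed point. First I would observe that if $0 \neq G \geq 0$ and $\phi(G) = G$, then $G > 0$: for $v \in \ker G$ we have $0 = \ip{v}{\phi(G) v} = \sum_i \norm{G^{1/2} X_i^* v}^2$, hence $X_i^* v \in \ker G$ for every $i$, so $\ker G$ is invariant under each $X_i^*$ and therefore under the algebra they generate, which equals $\mathcal{A}^* = M_n(\mathbb{C})$; since $M_n(\mathbb{C})$ acts irreducibly on $\mathbb{C}^n$ while $G \neq 0$, this forces $\ker G = \{0\}$. Now take any self-adjoint $H$ with $\phi(H) = H$ and set $c_0 = \min\{ c \in \mathbb{R} : cD - H \geq 0 \}$, which is well defined because $D > 0$. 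Then $c_0 D - H$ is a positive semidefinite fixed point of $\phi$ which, by minimality of $c_0$, is not positive definite, hence equals $0$ by the previous observation; so $H = c_0 D$. As the fixed-point space of $\phi$ is closed under adjoints, every fixed point of $\phi$ is a complex multiple of $D$, i.e.\ $\dim_{\mathbb{C}} \{ H : \phi(H) = H \} = 1$.

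Finally, since $T(\vecc H) = \vecc(\phi(H))$, the vectorization map carries the fixed-point space of $\phi$ onto $\ker(T - 1)$, so $\dim \ker(T - 1) = 1$. The identity $T \hat T = \hat T$ forces the range of $\hat T$ to be contained in $\ker(T - 1)$, while $\hat T \neq 0$ because $\hat T^\psi = \sum_k (\vecc B_k)(\vecc B_k)^* \neq 0$; hence $\operatorname{rank}\hat T = 1$. I expect the main obstacle to be this third step, the quantum Perron--Frobenius uniqueness statement: the squeezing argument is routine once one has both a strictly positive fixed point of $\phi$ --- exactly what Lemma~\ref{poslem} together with $\spn{B_k} = M_n(\mathbb{C})$ provides --- and the irreducibility of $\{X_i^*\}$, which is precisely where the hypothesis that $X_1, \ldots, X_d$ generate the full matrix algebra is used.
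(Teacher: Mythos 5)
Your proof is correct, and its opening moves coincide with the paper's: both first obtain $\hat T=\sum_k \overline{B_k}\otimes B_k$ with $\operatorname{span}\{B_k\}=M_n(\mathbb{C})$ from Theorem \ref{dyntheorem} together with simplicity of $M_n(\mathbb{C})$, and both apply Lemma \ref{poslem} against the Kraus family $\{E_{ij}\}$ to get the strict lower bound $\phi_{\hat T}(H)\geq \varepsilon(\Tr H)I$. The rank-one step, however, is genuinely different. The paper works by contradiction inside the positive semidefinite cone of $\phi_{\hat T}$ itself: assuming $\operatorname{rank}\hat T>1$ it produces two linearly independent positive semidefinite fixed points of $\phi_{\hat T}$, passes to a singular positive semidefinite combination $W_3$, and contradicts the lower bound; the nilpotent case $\hat T^2=0$ has to be excluded separately. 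You instead prove the full quantum Perron--Frobenius statement $\dim\ker(T-1)=1$: you extract a positive definite fixed point $D=\phi_{\hat T}(I)$ of $\phi$ from $T\hat T=\hat T$, show that any nonzero positive semidefinite fixed point must be positive definite because its kernel would be invariant under the irreducibly acting algebra generated by the $X_i^*$ (this is exactly where the full-algebra hypothesis enters for you), and then squeeze an arbitrary self-adjoint fixed point against $D$; the conclusion $\operatorname{rank}\hat T=1$ follows since $T\hat T=\hat T$ puts the range of $\hat T$ inside $\ker(T-1)$. Your route is somewhat longer but yields a stronger intermediate result (uniqueness of the invariant state, the standard irreducible Perron--Frobenius conclusion), treats the idempotent and nilpotent possibilities for $\hat T$ uniformly rather than by cases, and sidesteps the slightly informal passage in the paper where $T$ and $\hat T$ are interchanged in defining the matrices $W_j$. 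Both arguments ultimately rest on the same two inputs: the ideal structure from Theorem \ref{dyntheorem} and the strict positivity supplied by Lemma \ref{poslem}.
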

\begin{proof}
    First note that $\hat{T}=\sum_k \flattensor{\overline{B_{k}}}{B_{k}}$ where the $B_k$'s span $M_n(\mathbb{C})$ by Theorem \ref{dyntheorem} and the fact that $M_n(\mathbb{C})$ is simple.
    Note that is such a case, $\phi_{\hat{T}}(H) \geq \varepsilon (\Tr H) I$ by Lemma \ref{poslem}, since $(\Tr H) I = \sum E_{ij}HE_{ij}^*.$ 
    We will work to establish that there is a positive semidefinite matrix $H$ such that $\phi_{\hat{T}}(H)$ has a kernel.
    If $\hat{T}^2 = 0,$ for any postive matrix $H,$ either $H$ is in the kernel or $\phi_{\hat{T}}(H)$ is.
    
    Now, suppose $\hat{T}^2 = \hat{T}.$
    Just suppose the rank of $\hat{T}$ is greater than $1.$
    Now there must be $v_1, v_2 \in \mathbb{C}^n$ such that $T \overline{v_j}\otimes v_j \neq 0,$ and 
    $T \overline{v_1}\otimes v_1$ and $T \overline{v_2}\otimes v_2$ are linearly indepenent. Let $W_1$ and $W_2$ be matrices such that $\vecc W_j = T \overline{v_j}\otimes v_j.$
    Note $W_j = \sum_i X_iv_j(X_iv_j)^*.$ Note $\phi_{\hat{T}}(W_j) = W_j.$
    Since $W_1$ and $W_2$ are linearly independent positive semidefinite matrices, there is $W_3$ in their span which is positive semidefinite and singular.
    Moreover, $\phi_{\hat{T}}(W_3) = W_3$ which is singular. This contradicts the fact that $\phi_{\hat{T}}(H) \geq \varepsilon (\Tr H) I.$
\end{proof}

By considering the form of $\hat{T}=\vecc V (\vecc W)^*$ from the above theorem, and the fact that $\hat{T}^{\psi}=\overline{W}\otimes V$ must be positive, we see that both the $V, W$ must be positive and, therefore we can take square roots. An elementary calulation gives that the appropriate conjugation by $V^{1/2}$ and $W^{1/2}$ transform the corresponding completely positive map into either a unital or trace preserving map respectively.
\begin{corollary}\label{fullmaximalcor}
     Let $T = \sum \flattensor{\overline{X_i}}{X_i}.$
    Additionally, assume that $\hat{\rho}(X_1,\ldots , X_d)=1,$ and $X_1, \ldots, X_d$ generate the full algebra of $n$ by $n$ matrices. By Theorem \ref{fullmaximal},
	$$\hat{T}=\vecc V (\vecc W)^*.$$
	Then, $$\acute{X}= (V^{-1/2}X_1V^{1/2},\ldots, V^{-1/2}X_dV^{1/2})$$ is a row co-isometry and
	$$\grave{X}= (W^{1/2}X_1W^{-1/2},\ldots, W^{1/2}X_dW^{-1/2})$$ is a column isometry.
	Moreover, the corresponding $\phi_{\acute{T}}$ is unital and $\phi_{\grave{T}}$ is trace preserving.
\end{corollary}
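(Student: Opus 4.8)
The plan is to unpack the statement $\hat{T}=\vecc V (\vecc W)^*$ and exploit the two relations that pin down $\hat{T}$: that $\hat{T}^\psi = \overline{W}\otimes V \geq 0$ (from the Schur product property), and that $\hat{T}T = T\hat{T} = \hat{T}$, which after applying $\psi$ and $Q$ give the invariance identities $\sum_i X_i V X_i^* = V$ and $\sum_i X_i^* W X_i = W$. First I would justify that $V$ and $W$ are positive definite: positivity of $\overline{W}\otimes V$ forces $V$ and $W$ to each be positive semidefinite (up to a common phase, which we normalize away), and since the $B_k$'s span $M_n(\mathbb{C})$ by Theorem~\ref{fullmaximal}, $\hat{T}^\psi = \sum_k (\vecc B_k)(\vecc B_k)^*$ has full rank $n^2$, forcing both $V>0$ and $W>0$ so that $V^{1/2}, W^{1/2}$ and their inverses exist.

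Next I would compute directly. For the row co-isometry claim, set $\acute{X}_i = V^{-1/2}X_i V^{1/2}$ and check $\sum_i \acute{X}_i \acute{X}_i^* = V^{-1/2}\left(\sum_i X_i V X_i^*\right)V^{-1/2} = V^{-1/2} V V^{-1/2} = I$, using the invariance identity $\sum_i X_i V X_i^* = V$. That $\sum \acute{X}_i\acute{X}_i^* = I$ is exactly the statement that $\bbm \acute{X}_1 & \ldots & \acute{X}_d \ebm$ is a co-isometry, equivalently that $\phi_{\acute{T}}$ is unital (since $\phi_{\acute{T}}(I) = \sum_i \acute{X}_i I \acute{X}_i^* = I$). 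Symmetrically, for $\grave{X}_i = W^{1/2}X_i W^{-1/2}$, one computes $\sum_i \grave{X}_i^* \grave{X}_i = W^{-1/2}\left(\sum_i X_i^* W X_i\right)W^{-1/2} = W^{-1/2} W W^{-1/2} = I$, which says $\bbm \grave{X}_1 & \ldots & \grave{X}_d \ebm$ is a column isometry, equivalently $\sum_i \grave{X}_i^*\grave{X}_i = I$, which is exactly the trace-preserving (quantum channel) condition for $\phi_{\grave{T}}$.

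The one genuine point requiring care—and the step I expect to be the main obstacle—is extracting the invariance identities $\sum_i X_i V X_i^* = V$ and $\sum_i X_i^* W X_i = W$ cleanly from $\hat{T}T = T\hat{T} = \hat{T}$. Applying $\psi$ to $T\hat{T}=\hat{T}$ and using 4-modularity together with $\hat{T}^\psi = (\vecc W)(\vecc W)^*$-type rank-one structure (here $\hat{T}^\psi = \overline{W}\otimes V$ viewed appropriately, i.e. $(\psi$ of $\vecc V(\vecc W)^*)$ is $\overline{W}\otimes V$) gives $\sum_i \tensor{I}{X_i}\, \hat{T}^\psi \,\tensor{I}{X_i^*} = \hat{T}^\psi$; then applying $Q$ and the modularity of $Q$ (Proposition~\ref{Qproperties}) turns the left side into $\sum_i X_i Q(\hat{T}^\psi) X_i^*$ and, since $Q(\hat{T}^\psi) = Q(\overline{W}\otimes V)$ is a scalar multiple of $V$ by the vectorization identity, yields $\sum_i X_i V X_i^* = V$ after normalization. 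The identity for $W$ comes the same way from $\hat{T}T = \hat{T}$, taking adjoints at the appropriate stage or using the mirror-image modularity on the other tensor slot. Once these two identities are in hand, the rest is the routine conjugation computation sketched above, and the correspondence "co-isometry $\Leftrightarrow$ unital" and "isometry $\Leftrightarrow$ trace preserving" is immediate from the definitions recalled in the introduction.
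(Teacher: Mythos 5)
Your proof is correct and follows essentially the same route the paper sketches: positivity of $\hat{T}^{\psi}=\overline{W}\otimes V$ forces $V,W\geq 0$ after normalization, the relations $T\hat{T}=\hat{T}T=\hat{T}$ yield the invariance identities $\sum_i X_iVX_i^*=V$ and $\sum_i X_i^*WX_i=W$, and the conjugation computation finishes. You also make explicit a point the paper leaves implicit, namely that the $B_k$'s spanning $M_n(\mathbb{C})$ forces $\hat{T}^{\psi}$ to have rank $n^2$ and hence $V,W>0$, so that $V^{\pm 1/2}$ and $W^{\pm 1/2}$ actually exist.
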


\subsection{Spectral structure of $\sum X_i^{\otimes 2k}$} \label{ParilloSection}
	Blondel and Nesterov\cite{blondnest05}
	commented that the eigenvalues of the matrix $\sum X_i^{\otimes 2k}$ appeared to be structured.
	In Parrilo and Jadbabaie\cite{par08}, it was shown that one can project $\sum X_i^{\otimes 2k}$ onto some special invariant subspace
	which corresponds to the ``symmetric algebra of a vector space." We will now briefly explain these phenomena.
	
	We will restrict our attention to matrices over the reals.
	Let $(X_1, \ldots, X_d)\in M_n(\mathbb{R}).$
	Let $Y_{k} =\sum X_i^{\otimes k}.$
	For any permutation $\sigma: \{1, \ldots, k\} \rightarrow \{1, \ldots, k\},$ we define
	$P_\sigma$ to be the map which takes
$v_1\otimes v_2 \otimes \ldots \otimes v_k$ to $v_{\sigma(1)}\otimes v_{\sigma(2)} \otimes \ldots \otimes v_{\sigma(k)}.$
	A direct calculation gives the following.
	\begin{rem}
		$P_\sigma Y_k = Y_kP_\sigma.$ Thus, if $v$ is an eigenvector for $Y_k$ with eigenvalue $\lambda,$  then so is $P_{\sigma}v.$ 
	\end{rem}
	We also note that one can derive the structure of $P_\sigma$ (and thus the invariant subspaces of $Y_k$) using
	classical representation theory of the symmetric group, although this is rather involved
	\cite{sagan}.
	
	Moreover, in test cases, we saw that the eigenvalue with maximum modulus of $Y_{2k}$ was always real and positive, and, in nontrivial examples,
occured with multiplicity $1,$ which reflects the general situation described  in Theorem \ref{maximalreal}. In such a case, the eigenvector corresponding to the maximum eigenvalue must satisfy $P_{\sigma}v = \pm v.$
(In test cases, we found that $P_{\sigma}v = v.$)
In general, the space $V = \{v | P_\sigma v = v \forall \sigma \}$ is an invariant subspace of $Y_k.$ In fact, it follows from Parrilo and Jadbabaie\cite[Theorem 4.2]{par08} that
		$$\rho(X_1,\ldots,X_d) \leq \rho(Y_{2k}|_V)^{1/2k} \leq \rho(Y_{2k})^{1/2k}.$$
		
	We now remark that one can rephrase the above in terms of a certain action on polynomials.
	We the naturals action $\tau$ of each $X_i$ on a ring of polynomials $\mathbb{R}[e_1,\ldots, e_n]$
	given by $\tau_{X_i} \cdot p(e) = p(X_ie)$ (here $e$ is the column vector with entries $e_i$.)
One can check that the action of $Y_k$ on $V$ is isomorphic to the action $\sum \tau_{X_i}$ on homogeneous polynomials
of degree $k$ in $\mathbb{R}[e_1,\ldots, e_n],$ denoted $\mathbb{R}[e_1,\ldots, e_n]_k.$ That is, we have the following theorem which follows immediately from the Parrilo and Jadbabaie inequality above and Corollary \ref{osrfccor}.
%where the inner product is defined so that
%monomials are orthogonal and $$\|e_1^{k_1}\ldots e_n^{k_n}\| = \sqrt{\frac{k!}{k_1!\ldots k_n!}}.$$
	
	\begin{theorem} \label{parrilorephrase}
		 Let $(X_1,\ldots, X_d) \in M_n(\mathbb{R})^d.$ Let $\tau_{X_i}$ denote the natural action of
		 $X_i$ on  polynomials $\mathbb{R}[e_1,\ldots,e_n].$
		$$\rho(X_1,\ldots, X_d) = \lim_{k\rightarrow \infty} \rho\left(\sum \tau_{X_i}|_{\mathbb{R}[e_1,\ldots,e_n]_{2k}} \right)^{1/2k}.$$
	\end{theorem}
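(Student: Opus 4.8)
\textbf{Proof proposal for Theorem \ref{parrilorephrase}.}

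The plan is to reduce the statement entirely to two ingredients already in hand: the Parrilo--Jadbabaie sandwich inequality
$$\rho(X_1,\ldots,X_d) \leq \rho(Y_{2k}|_V)^{1/2k} \leq \rho(Y_{2k})^{1/2k},$$
displayed just above the theorem, and the identification of $Y_{2k}|_V$ with the operator $\sum \tau_{X_i}$ acting on $\mathbb{R}[e_1,\ldots,e_n]_{2k}$. So the first step is to make the latter isomorphism precise. Here $V = \{v \mid P_\sigma v = v \ \forall \sigma\}$ is the symmetric subspace of $(\mathbb{R}^n)^{\otimes 2k}$, and the classical fact is that the symmetrization map sending $v_1 \otimes \cdots \otimes v_{2k}$ to the product of the linear forms $\langle v_j, e\rangle$ (or dually, $e \mapsto e^{\otimes 2k}$ followed by restriction) furnishes a linear isomorphism between $V$ and the degree-$2k$ homogeneous polynomials in $n$ variables. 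Under this isomorphism each $X_i^{\otimes 2k}|_V$ intertwines with the substitution operator $p(e) \mapsto p(X_i^T e)$; summing over $i$ gives that $Y_{2k}|_V$ is conjugate to $\sum \tau_{X_i}$ restricted to $\mathbb{R}[e_1,\ldots,e_n]_{2k}$. Conjugate operators have equal spectral radii, so $\rho(Y_{2k}|_V) = \rho\big(\sum \tau_{X_i}|_{\mathbb{R}[e_1,\ldots,e_n]_{2k}}\big)$.

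The second step is to pin down the upper end. Corollary \ref{osrfccor} gives $\rho(X_1,\ldots,X_d) = \lim_{k\to\infty} \hat\rho(X_1^{\otimes k},\ldots,X_d^{\otimes k})^{1/k}$, and unwinding the definition of $\hat\rho$ one has $\hat\rho(X_1^{\otimes k},\ldots,X_d^{\otimes k})^2 = \rho\big(\sum \overline{X_i^{\otimes k}} \otimes X_i^{\otimes k}\big) = \rho\big(\sum X_i^{\otimes 2k}\big) = \rho(Y_{2k})$ (the conjugates are trivial over $\mathbb{R}$, and $\overline{A}\otimes A$ is a reordering of $A^{\otimes 2}$, with matching spectral radius). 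Hence $\rho(X_1,\ldots,X_d) = \lim_{k\to\infty} \rho(Y_{2k})^{1/2k}$. Combining this with the Parrilo--Jadbabaie sandwich, the middle term $\rho(Y_{2k}|_V)^{1/2k}$ is squeezed between a sequence tending to $\rho(X_1,\ldots,X_d)$ and the constant lower bound $\rho(X_1,\ldots,X_d)$, so it converges to $\rho(X_1,\ldots,X_d)$ as well. Substituting the isomorphism from the first step finishes the proof.

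I do not expect a genuine obstacle: the bulk of the work is citing Corollary \ref{osrfccor} and the Parrilo--Jadbabaie inequality, both granted, and the squeeze is immediate. The one point requiring care is the symmetric-tensor/polynomial dictionary in the first step — getting the transpose (versus the matrix itself) on the right side of the substitution operator correct, and checking that the symmetrization isomorphism genuinely intertwines the two actions rather than merely matching dimensions. Since the paper is content to state this "follows immediately," I would present the isomorphism as a short remark, note that it respects the $X_i$-actions, and let the spectral-radius squeeze do the rest.
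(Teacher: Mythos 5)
Your proposal is correct and follows essentially the same route as the paper: the paper derives the theorem by combining the Parrilo--Jadbabaie sandwich $\rho(X_1,\ldots,X_d) \leq \rho(Y_{2k}|_V)^{1/2k} \leq \rho(Y_{2k})^{1/2k}$ with Corollary \ref{osrfccor} (which, over $\mathbb{R}$, gives $\lim_k \rho(Y_{2k})^{1/2k} = \rho(X_1,\ldots,X_d)$ since $\overline{X_i^{\otimes k}}\otimes X_i^{\otimes k}$ is a permutation-conjugate of $X_i^{\otimes 2k}$) and the identification of $Y_{2k}|_V$ with $\sum \tau_{X_i}$ on $\mathbb{R}[e_1,\ldots,e_n]_{2k}$. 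Your additional care about the symmetrization isomorphism and the transpose convention is a harmless refinement of what the paper leaves implicit.
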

	We note that the total dimension of
of the homogenous polynomials of degree $k$ in $n$ variables is equal to $h_{n,k} ={{n+k-1}\choose{n-1}} \approx k^{n-1}/n!$, so we can realize
$\tau_{k}$ as an $h_{n,k}$ by $h_{n,k}$ matrix. Theorem \ref{parrilorephrase} is equivalent to the method in Parrilo and Jadbabaie \cite{par08} called $\rho_{SR,2d},$ and gives a natural
interpretation of $Y_k$ restricted to $V.$
%Furthermore, evaluation of the natural action does not require the computation
%of permanents.
%Fast methods of computing this approximation are now known, see \cite{par08}.

%In turn, we see that it is enough to consider the eigenvalues of $X_k$ restricted to the subspace
%$V = \{v | Pv = v\}.$ One can check that the action of $X_k$ on $V$ is isomorphic to the sum of the actions of each $X_i$ on homogeneous polynomials
%of degree $k$ in $\mathbb{C}[e_1,\ldots, e_n]$ given by $X_i \cdot e_jw = (X_ie_j)(X_i \cdot w)$ where the inner product is defined so that
%monomials are orthogonal and $\|e_1^{k_1}\ldots e_n^{k_n}\| = \sqrt{\frac{k}{k_1!\ldots k_n!}}.$ We note that the total dimension of
%of the homogenous polynomials of degree $k$ in $n$ variables is equal to ${{n+k-1}\choose{n-1}} \approx k^{n-1}$.
%\black
%Noting that, generically, the largest eigenvalue is distinct we see that 
%Notably, Corollary \ref{osrfccor} gives a method one can use the outer spectral radius obtain a family of asymptotically tight approximations of joint spectral radius.

%\red
%We say a tuple $(X_1,\ldots, X_d) \in M_n(\mathbb{C})^d$ is \dfn{equivalent} to $(Y_1,\ldots, Y_d) \in M_m(\mathbb{C})^d,$ also written as $(X_1,\ldots, X_d) \sim (Y_1,\ldots, Y_d),$
%if for any free polynomial $p$ in $d$ variables
%	$$p(X_1,\ldots,X_d) = 0 \Leftrightarrow p(Y_1,\ldots,Y_d) = 0.$$
%Essentially, two points are equivalent if they have the same algebraic properties.\black

\bibliography{references}
\bibliographystyle{plain}

\end{document}